\documentclass{article}

\usepackage[english]{babel}
\usepackage{amsmath,amssymb,amsthm,mathtools}
\usepackage{enumitem}
\usepackage[authoryear,round]{natbib}
\usepackage{mathtools,amscd}
\usepackage[letterpaper,top=2cm,bottom=2cm,left=3cm,right=3cm,marginparwidth=1.75cm]{geometry}

\usepackage{amsmath,amssymb, amsfonts,amsthm}
\usepackage{graphicx}
 \usepackage{dsfont}
\newtheorem{example}{Example}
\newtheorem{prop}{Proposition}
\newtheorem{lemma}[prop]{Lemma}
\newtheorem{cor}[prop]{Corollary}
\newtheorem{thm}[prop]{Theorem}
\newenvironment{remark}{{\noindent\it Remark:~}}{}

\title{ Inconsistency between Frequentist and Bayesian inference, approximation of improper priors and  permissible priors}
\author{Pierre Druilhet\footnote{\textit{Corresponding author : pierre.druilhet@uca.fr}} \ and Erwan Saint Loubert Bié,\\Laboratoire de Mathématiques Blaise Pascal, \\Université Clermont auvergne}

\begin{document}
\maketitle

\begin{abstract}
Improper priors are widely used in objective Bayesian inference, although posterior propriety does not by itself guarantee coherence between Bayesian and frequentist assessments. We investigate this issue by approximating improper priors with proper priors and examining the convergence of the resulting posterior distributions. We introduce monotone $q$-vague convergence, which ensures pointwise Kullback–Leibler (KL) convergence of the posteriors, and study expected KL (EKL) convergence together with its total-variation counterpart (ETV).
Through continuous and discrete examples, we show that EKL convergence can depend on the sequence of compact sets used for approximation. In particular, convergence along one compact exhaustion need not extend to others.

Our main result connects posterior approximation to inconsistency: strong inconsistency prevents both ETV and EKL convergence for any proper-prior approximation. Consequently, KL-permissible and TV-permissible priors cannot be strongly inconsistent. We also introduce weak inconsistency and show that it may persist despite EKL convergence.
Finally, a unified discrete framework covering the Flatland and Fraser–Monette–Ng paradoxes illustrates the distinction between these notions. In particular, ETV convergence need not imply EKL convergence, while the absence of weak inconsistency does not guarantee an EKL-convergent compact approximation.
\end{abstract}

\section{Introduction}

 Improper priors play a central role in objective Bayesian inference.
Standard constructions based on invariance, Fisher information or
reference-prior arguments often lead to measures with infinite total
mass, including flat priors for location parameters and Haar measures
on noncompact groups
\citep{jeffreys1961,bernardo1979,bergerbernardo1992,bergerbernardosun2009}.
Although these measures are not probability distributions, applying
Bayes' formula may produce proper posterior distributions. The resulting
procedures are widely used, and understanding when they can be justified
remains a fundamental question.

Posterior propriety provides only a first requirement. The
marginalization paradoxes of \citet{stonedawid1972} and
\citet{dawidstonezidek1973} showed that inference based on improper priors
can depend on how apparently equivalent calculations are performed.
Another difficulty arises when posterior and sampling assessments
exhibit a systematic incompatibility. In Stone's Flatland example, an
event has a posterior probability uniformly larger than its sampling
probability, for every observation and every parameter value
\citep{stone1976}. This
phenomenon is known as \emph{strong inconsistency} and  cannot occur under a proper prior,
because averaging the two conditional probabilities under the
corresponding joint distribution must give the same result.

A natural justification for the use of an improper prior is to interpret its formal posterior as a limit of posterior distributions obtained from proper approximating priors. A complementary probabilistic foundation for the use of improper priors is provided by the theory of conditional probability spaces, as developed by \cite{taralind2010,taralind2016}.
An alternative extension of the usual Bayesian framework is to use finitely additive probabilities, an approach advocated by \citet{defi1972} and further developed by \citet{dubins1975,heathsudderth1978,heathsudderth1989}. Finite additivity is closely related to issues of coherence, conditional probability, and non-conglomerability \citep{dubins1975,heathsudderth1978,heathsudderth1989}, while \citet{eatonsudderth1998,eatonsudderth1999,eatonsudderth2002} investigated related phenomena of consistency and strong inconsistency, particularly in invariant statistical models.
\citet{stone1982} and \citet{kada1986} proposed to relate improper priors and finitely additive priors by viewing both as limits of sequences of proper priors, under different notions of convergence. However, \citet{drusai2021} showed that these two limiting procedures capture fundamentally different features of an approximating sequence. More precisely, if a sequence of proper priors has both a finitely additive limit and an improper limit, one can construct another sequence with the same finitely additive limits but converging to any prescribed improper distribution. The reason is that an improper limit describes the behavior of the sequence within the parameter space after rescaling, whereas, independently,  finitely additive limits describe how probability mass concentrates at the boundary of the space.

These approximation arguments typically rely on pointwise convergence of the posterior distributions, that is, convergence for each fixed observation $x$. In particular, if an improper prior is restricted to an increasing sequence of compact sets, the corresponding posterior distributions converge pointwise in Kullback--Leibler divergence to the formal posterior whenever the latter is proper \citep{bergerbernardosun2009}. Such pointwise convergence is therefore too weak, by itself, to distinguish improper priors that lead to strongly inconsistent inference from those that do not.

To overcome this limitation, \citet{bergerbernardosun2009} introduced expected Kullback--Leibler (EKL) convergence, in which the KL divergence between the approximating and formal posterior distributions is averaged over the sample space with respect to the marginal distribution induced by the approximating prior. 
They  defined an improper  prior $\pi$ as \textit{permissible} if    there exists an approximating sequence of compact sets along which EKL convergence holds. Permissibility is  an explicit requirement in their formal definition of reference priors.  
  
To extend these approximation criteria beyond compact truncations, we consider convergence directly at the level of prior measures. \citet{bidr2016} introduced $q$-vague convergence by identifying Radon measures that differ only by a positive multiplicative constant, in accordance with the invariance of posterior inference under rescaling of the prior. This provides a likelihood-independent framework in which improper priors can be interpreted as limits of proper priors. However, $q$-vague convergence alone is generally too weak to ensure convergence of the corresponding posterior distributions.
However, $q$-vague convergence alone does not guarantee KL convergence of the corresponding posteriors. We therefore consider a monotone version, termed $mq$-vague convergence, which includes compact truncations and ensures KL convergence for each fixed observation whenever the limiting posterior is proper. Within this framework, we extend EKL convergence to general sequences of proper approximating priors. We also consider expected total variation (ETV) convergence, related to the convergence in probability studied by \citet{stone1970} in invariant models. This integrated criterion provides a direct way to study inconsistencies between Bayesian and frequentist inference.

Our main result establishes a direct connection between integrated posterior discrepancies and strong inconsistency. If a formal posterior is strongly inconsistent, then its ETV discrepancy from the posterior induced by any proper prior is bounded below by a strictly positive constant determined by the extent of the inconsistency. Pinsker's inequality then yields a corresponding positive lower bound for the EKL divergence. Hence, strong inconsistency precludes both ETV and EKL convergence, regardless of the approximating sequence of proper priors. In particular, a permissible prior cannot induce strongly inconsistent inference. This result provides a general justification for a relationship that had previously been suggested on the basis of examples in \citet{bergerbernardosun2009}.

The converse implications are more delicate. We introduce the notion of \emph{weak inconsistency}, which preserves a strict separation between posterior and sampling expectations. Like strong inconsistency, weak inconsistency cannot occur under a proper prior. However, we show that weak inconsistency may persist even when EKL convergence holds.
We also provide both continuous and discrete counterexamples to the conjecture, informally stated by \citet{bergerbernardosun2009}, that EKL convergence along one compact exhaustion implies convergence along every such exhaustion. Although all these truncation schemes yield pointwise convergence of the posterior distributions, the associated marginal distributions may allocate very different amounts of mass to regions where truncation has a substantial effect on the posterior. This is consistent with the fact, discussed above, that $mq$-vague convergence does not control how prior mass escapes to infinity.

 In Section \ref{section.limits}, we  develop the framework for approximating improper priors by proper priors. We compare  $q$-vague and $mq$-vague convergence at the prior level with  pointwise KL convergence of the corresponding posteriors. Then we consider the integrated criteria EKL and ETV together with the associated notions of KL- and TV-permissibility.
  In Section \ref{section.InconsitencyetELC} we review some classical examples of strong inconsistency and introduce weak inconsistency. Then, we establish our main result: strong inconsistency rules out both KL- and TV-permissibility, whereas permissibility does not preclude weak inconsistency. In section    \ref{section.generic.mmodel} we  introduce a generic discrete model encompassing the Flatland and Fraser--Monette--Ng paradoxes. This framework is used to illustrate a variety of behaviors, including permissible priors with weak inconsistency, ETV convergence without EKL convergence, dependence of EKL convergence on the choice of the approximating compact sequence, and failure of EKL-permissibility in the absence of weak inconsistency. Technical proofs and additional calculations are collected in the Appendix.

\section{Improper priors as limits of proper priors}
\label{section.limits} 

 A classical justification for the use of improper priors in Bayesian analysis is to regard them as limits of sequences of proper priors. In this view, an improper prior is acceptable when the limiting Bayes procedure obtained from proper priors coincides with the generalized Bayes procedure associated with the improper prior \citet{jeffreys1961,Berger1985,Robert2007}. This limiting argument underlies several constructions of objective priors, including Jeffreys priors and reference priors \citet{jeffreys1961,bernardo1979,kaswas1996}.
 
\medskip

\noindent Improper priors have also been discussed, from a frequentist decision-theoretic perspective, in connection with inadmissible decision rules. A standard example, due to \citet{stonedawid1972}, considers one observation
$
X\sim \mathcal N(\theta,1)
$
and the improper prior
$
\pi(\theta)\propto e^{a\theta}.
$
Under squared error loss $L(\theta,y)=(\theta-y)^2$, the corresponding generalized Bayes estimator is

\begin{equation}
	\label{eq.estimateur.nonadmissible}
	\widehat\theta(x)=x+a,
\end{equation}
which is inadmissible unless $a=0$. However, this example should not be interpreted as showing that impropriety itself causes inadmissibility. Rather, inadmissibility is a property of the resulting decision rule, relative to the sampling model and the loss function.
Indeed, the same decision rule and inadmissibility result can be obtained from a proper prior by modifying both the original loss function and prior. For instance, consider the normal proper distribution
$
\tilde\pi(\theta)\propto e^{a\theta}e^{-\theta^2/2},
$ 
 and the loss function 
$
\tilde L(\theta,y)=e^{\theta^2/2}(\theta-y)^2,
$
then the posterior risk under $(\tilde\pi,\tilde L)$ is, up to the same normalizing factor, identical to the posterior risk under $(\pi,L)$ and therefore give the same non-admissible  Bayes estimator

\medskip

In this section, let $(\pi_n)_{n\ge 1}$ be a sequence of proper priors, and let
$\pi$ denote a prior that may be either proper or improper, although in most
cases $\pi$ will be improper. By an improper distribution, we mean a Radon measure on a locally compact, Haussdorf $\sigma$-compact space, that is, a non-zero  measure that is finite on every compact set, or, in the discrete case, on every finite set. Denote by $p(x\mid \theta)$ the likelihood, and set
$$
p_n(x)=\int p(x\mid \theta)\,\pi_n(\theta)\,d\theta
$$
for the corresponding marginal likelihood. Since $\pi_n$ is proper, the posterior $\pi_n(\theta\mid x)\propto p(x\mid\theta)\,\pi_n(\theta)$ is a.s. proper. 
When $\pi$ is improper, we will assume that  $p(x)=\int p(x\mid\theta)\,\pi(\theta)\,d\theta<\infty$,  so that   the formal posterior distribution   $\pi(\theta\mid x)\propto p(x\mid\theta)\,\pi(\theta)$ is proper as a probability distribution. When $p(x)=\infty$, the posterior distribution is no longer proper. However, the posterior measure may still be defined, up to a positive multiplicative constant, by
$
\pi(\theta\mid x)\propto p(x\mid\theta),\pi(\theta),
$
provided that this expression induces a Radon measure on the parameter space \citep[see][for a more general treatment]{taraldsen2022}.

\subsection{Pointwise KL convergence of posteriors}
\label{section.logconv}

A natural way of defining  the convergence of a sequence of prior distributions to an improper one is to consider the convergence of the posterior distributions. For example, following  \citet{bergerbernardosun2009},    the sequence  $(\pi_n)$ of proper prior   approximates $\pi$ if the posterior $\pi(\cdot \mid x)$ is the KL limit of the sequence  $\pi_n(\cdot \mid x)$, that is  
\begin{equation}
\label{eq.KL.post.convergence}
	\operatorname{KL}\left(\pi_n\left(\cdot\mid x\right) \| \pi\left(\cdot\mid x\right)\right)\xrightarrow[]{n\rightarrow \infty}0
\end{equation}
where KL is the Kullback-Leibler divergence defined by

\begin{equation}
	\operatorname{KL}\left(\mu_1 \| \mu_2\right)=\int_{\Theta}{\mu_1}\left(\theta\right) \log \left(\frac{{\mu_1}\left(\theta\right)}{\mu_2\left(\theta\right)}\right) \mathrm{d} \theta.
\end{equation}
Note that this convergence is named \textit{logarithmic convergence} in  \cite{bergerbernardosun2009} and \textit{reverse KL convergence} in \cite{bergerbernardosun2024}.
As a special case, the sequence $\pi_n$ can be defined through   \textit{ approximating compact set  sequence}, that is an increasing  sequence of compacts set $\{K_n\}_n$ of $\Theta$   with $\lim_n K_n=\Theta$.  The following proposition shows that the corresponding sequence of truncated priors of a prior $\pi$  satisfies  \eqref{eq.KL.post.convergence}.
\begin{prop}[Theorem 1 in  \citet{bergerbernardosun2009}]
	\label{bbsthm1}
	Let $K_n$ be an approximating compact sequence of $\Theta$, and let $\pi$ be an improper distributions such that the posterior is proper. Define   $\pi_n(\theta)$  to be the renormalized restrictions of $\pi$ to $K_n$. Then   the sequence of posterior distributions $\pi_n(\cdot\mid x)$  KL converge   to $\pi(\cdot\mid x)$. 
\end{prop}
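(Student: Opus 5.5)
The argument is a direct computation: the truncated posterior is an explicit renormalisation of the formal posterior, so its KL divergence to the formal posterior has a closed form. Fix $x$ with $0<p(x)<\infty$. Since $\pi_n(\theta)=\pi(\theta)\mathds{1}_{K_n}(\theta)/\pi(K_n)$ and the factor $\pi(K_n)$ cancels in Bayes' formula, I would first write
\[
\pi_n(\theta\mid x)=\frac{p(x\mid\theta)\pi(\theta)\mathds{1}_{K_n}(\theta)}{\int_{K_n}p(x\mid\vartheta)\pi(\vartheta)\,d\vartheta}
=\frac{\pi(\theta\mid x)\,\mathds{1}_{K_n}(\theta)}{\Pi_x(K_n)},
\qquad \Pi_x(K_n):=\int_{K_n}\pi(\theta\mid x)\,d\theta .
\]
In words, $\pi_n(\cdot\mid x)$ is the formal posterior conditioned on $K_n$. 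This needs $\Pi_x(K_n)>0$, which holds for $n$ large (see below).

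Next, on $K_n$ the log-ratio $\log\bigl(\pi_n(\theta\mid x)/\pi(\theta\mid x)\bigr)$ equals the constant $-\log \Pi_x(K_n)$, and $\pi_n(\cdot\mid x)$ vanishes off $K_n$. Hence
\[
\operatorname{KL}\bigl(\pi_n(\cdot\mid x)\,\|\,\pi(\cdot\mid x)\bigr)=-\log \Pi_x(K_n).
\]
This quantity is finite as soon as $\Pi_x(K_n)>0$, so no integrability issue arises.

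It then remains to show $\Pi_x(K_n)\to 1$. The sets $K_n$ increase to $\Theta$ and $\pi(\cdot\mid x)$ is a probability measure. Continuity from below (monotone convergence) therefore gives $\Pi_x(K_n)\uparrow \Pi_x(\Theta)=1$, and so $-\log\Pi_x(K_n)\to 0$. The same limit shows $\Pi_x(K_n)>0$ for all large $n$, which justifies the first step eventually.

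The only delicate point is the qualifier ``for all $x$''. The result is stated for every $x$ with $p(x)\in(0,\infty)$, that is, marginal-almost surely. For small $n$ one may have $\int_{K_n}p(x\mid\theta)\pi(\theta)\,d\theta=0$, in which case $\pi_n(\cdot\mid x)$ is undefined. This is harmless because the convergence statement only concerns large $n$. I expect no genuine obstacle beyond this bookkeeping.
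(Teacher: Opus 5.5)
Your proof is correct and is essentially the paper's argument specialised to truncation. The paper derives Proposition~\ref{bbsthm1} from Lemma~\ref{prop.mqvcKn} and Proposition~\ref{prop.mqvaguelogcv}, whose proof bounds the divergence by $\log\bigl(p(x)/(a_n p_n(x))\bigr)$ using $a_n\pi_n(\theta)/\pi(\theta)\le 1$ and then applies monotone convergence to $a_n p_n(x)\uparrow p(x)$; with $a_n=\pi(K_n)$ this bound is an equality, equal to your $-\log \Pi_x(K_n)$, and your exact identity is also the one the paper uses later in the proof of Corollary~\ref{corol.ECL.min}, where equality rather than an upper bound is what makes the Jensen lower bound on EKL work.
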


Although KL convergence appears to be likelihood-dependent, since it is formulated in terms of posterior distributions, Proposition~\ref{bbsthm1} shows that, for suitable sequences of proper priors $(\pi_n)_n$, the limiting behavior may in fact be likelihood-invariant. However, in some situations, the limiting prior $\pi$ need not be unique, as illustrated in the following artificial  example :

\begin{example}
	\label{example.nonuniqueness}
Let  $X=\theta+U$ where $\theta$ is an unknown non-negative integer and $
U$ random variable in $[0,1]$ uniformly distributed. For any proper prior distribution $\pi_n(\theta)>0$, the posterior distribution is  $\pi_n(\theta\mid x)=\delta_{\lfloor x \rfloor}$, the  Dirac measure on $\lfloor x \rfloor$, the integer part of $x$, which does not depends on $\pi_n$.  This also holds for any improper prior $\pi(\theta)>0$. Consequently, in that case,    any sequence $\pi_n(\theta)>0$ of proper priors KL converges   to  any improper prior  $\pi(\theta)>0$.

\end{example}
~

\subsection{ $Q$-vague and $mq$-convergence of priors}
\label{section.qvaguecv}
 
Rather than defining convergence of prior distributions through the convergence of the corresponding posteriors, \citet{bidr2016} introduced a mode of convergence, named $q$-vague convergence,  defined directly on the priors themselves, and therefore independently of both the likelihood and the resulting posterior distributions. The idea is to regard prior proper and improper distributions as elements of the projective space of Radon measures. In this section, we also introduce $mq$-vague convergence, a slightly stronger mode of convergence based on the same principle.

The starting point is the elementary observation that the  posterior inference is invariant under multiplication of the prior   by a positive constant. It is therefore natural to identify prior measures that differ only by such a multiplicative factor. This leads to the equivalence relation:
$\pi_1 \sim \pi_2$
iff
 there exists $\alpha>0 \text{ such that } \pi_2=\alpha\,\pi_1 .$
Thus, a prior can be viewed as an equivalence class, under
equivalence relation, of nonnegative Radon measures excluding the zero
measure. The corresponding quotient topology  is then called the $q$-vague topology. The related  mode of convergence can be formulated as follows: a sequence
$(\pi_n)$ is said to converge $q$-vaguely to $\pi$ if there exists a
sequence $(a_n)$ of positive real numbers such that $a_n\pi_n$ converges vaguely to $\pi$, or equivalently such that,  for every real-valued
continuous function $f$ with compact support,
\begin{equation}
\label{eq.cvqvague}
a_n\pi_n(f) \xrightarrow[n\rightarrow +\infty]{} \pi(f).
\end{equation}
An important point is that the $q$-vague limit, when it exists, is unique up to multiplication by a positive constant. Moreover, this approach is consistent with the likelihood principle, according to which proportional likelihoods should lead to the same inference.   Under suitable conditions on the likelihood, $q$-vague convergence is inherited
  by the corresponding posterior distributions, for example:

\begin{prop}[Prop. 3.1 in \citet{bidr2016}]
	\label{prop.conv.qvague.posterior}
Assume that $\theta\mapsto p(x\mid\theta)$ is a nonzero continuous function, and $\pi$ an improper distribution such that $p(x\mid\theta) \pi(\theta)$ is also a non-zero measure for $\theta$. Then   $q$-vague convergence of $\pi_n$ to $\pi$ implies $q$-vague convergence of the posterior distributions $\pi_n(\cdot\mid x)$ to $\pi(\cdot\mid x)$.
\end{prop}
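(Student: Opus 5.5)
The plan is to work directly from the definition of $q$-vague convergence. By assumption there are constants $a_n>0$ such that $a_n\pi_n(f)\to\pi(f)$ for every continuous, compactly supported $f$. Write $\mu_n$ for the measure with density $p(x\mid\theta)\pi_n(\theta)$ and $\mu$ for the one with density $p(x\mid\theta)\pi(\theta)$. The posteriors $\pi_n(\cdot\mid x)$ and $\pi(\cdot\mid x)$ are, as elements of the projective space, the classes of $\mu_n$ and $\mu$, since normalisation is irrelevant there. It therefore suffices to show two things:
\begin{itemize}
\item[(i)] $a_n\mu_n$ converges vaguely to $\mu$;
\item[(ii)] the classes involved are well defined, i.e. $\mu_n$ and $\mu$ are nonzero Radon measures.
\end{itemize}
Then the same constants $a_n$ (or, equivalently, $a_n$ times any normalising constants of the posteriors) witness $q$-vague convergence of the posteriors.

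For (i), fix $f$ continuous with compact support $K$. Since $\theta\mapsto p(x\mid\theta)$ is continuous, the product $g=f\,p(x\mid\cdot)$ is again continuous with support in $K$. Hence
\[
a_n\mu_n(f)=a_n\pi_n(g)\to\pi(g)=\mu(f).
\]
This is the whole convergence argument; no domination or tightness is needed, because vague convergence only tests against compactly supported functions.

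For (ii), $\mu$ is nonzero by hypothesis. It is Radon because $p(x\mid\cdot)$ is bounded on compacts by continuity and $\pi$ is Radon, so $\mu(K)\le \sup_K p(x\mid\cdot)\,\pi(K)<\infty$. The same argument shows that each $\mu_n$ is Radon.

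The only delicate point, which I expect to be the main obstacle, is that $\mu_n$ must be nonzero, at least for $n$ large. Otherwise $\pi_n(\cdot\mid x)$ is not an element of the projective space. To handle this, use that $\mu\neq 0$ to pick a compactly supported continuous $f\ge0$ with $\mu(f)>0$. Then $a_n\mu_n(f)\to\mu(f)>0$, so $\mu_n(f)>0$ and hence $\mu_n\neq0$ for all large $n$. Discarding finitely many terms does not affect convergence, which concludes the argument.
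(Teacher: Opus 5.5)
Your proposal is correct and is essentially the argument behind the cited result (the paper gives no proof of its own and defers to \citet{bidr2016}): for $g$ continuous with compact support, $g\,p(x\mid\cdot)$ is again continuous with compact support, so $a_n\pi_n\to\pi$ vaguely gives $a_n\,p(x\mid\cdot)\pi_n\to p(x\mid\cdot)\pi$ vaguely, and your test-function argument correctly shows that $p(x\mid\cdot)\pi_n$ is nonzero for large $n$. Working with classes of unnormalised measures also covers the case, noted right after the statement, where a posterior is an improper Radon measure.
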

\noindent  Note that Proposition~\ref{prop.conv.qvague.posterior} remains valid when the
 $\pi(\theta\mid x)$ is an improper Radon measure.  

~

Q-vague convergence may be useful to understand some Bayesian behaviors as the Jeffeys-Lindley paradox in hypothesis testing, as in the following example : 

\begin{example}
\label{exemple.jeffreyslindley}
Let $x|\theta \sim \mathcal N(\theta,1)$ and  $\pi_n =\rho\; \delta_0 + (1-\rho)\;  \mathcal N(0,n)$  for $0<\rho<1$ and $\delta_0$ is the Dirac measure in $0$. We have 
$$\pi_n\xrightarrow[n\rightarrow +\infty]{q-\textrm{vaguely}} \delta_0$$ and therefore, from Prop., \ref{prop.conv.qvague.posterior}, for any $x$, 
$$
\pi_n(\cdot\mid x) \xrightarrow[n\rightarrow +\infty]{q-\textrm{vaguely}} \delta_0.
$$
Since $p(x\mid\theta)\to 0$ as $\theta\to\pm\infty$, the posterior convergence is actually narrow \citep[][Prop.~7.1]{bidr2016}, and is therefore stronger than $q$-vague convergence.
\end{example}

~

To connect the projective-space approach with KL convergence, and subsequently with expected KL convergence, we introduce a slightly stronger notion of convergence, called monotone $q$-vague convergence, or simply $mq$-vague convergence. A sequence $(\pi_n)$ is said to converge $mq$-vaguely to $\pi$ if there exists a sequence of positive scalars $(a_n)$ such that,  for almost every $\theta$,    $a_n \pi_n(\theta)$ converge non-decreasingly to $\pi(\theta)$, i.e. 
 
\begin{equation}
	\pi_n\xrightarrow[n\rightarrow +\infty]{mq-\textrm{vaguely}} \pi  \Longleftrightarrow a_n \pi_n(\theta) \uparrow \pi(\theta)
\end{equation}
where $\pi(\theta)$ and $\pi_n(\theta)$ are densities w.r.t a common dominating $\sigma$-finite Radon  measure. In this case, $mq$-vague convergence of the priors implies $mq$-vague convergence of the corresponding posterior distributions, since the likelihood $p(x\mid\theta)$ is nonnegative. 
 The following result, which  is a direct consequence of dominated convergence theorem,  shows that $mq$-vague convergence is stronger than $q$-vague convergence, and therefore the $mq$-vague limit of $(\pi_n)$, when it exists, is also  unique, up to a scalar factor.

\begin{lemma}
	\label{lemma.mqqvague}
If a  sequence $\pi_n$  converge $mq$-vaguely to $\pi$, then it also converges  $q$-vaguely to $\pi$.
\end{lemma}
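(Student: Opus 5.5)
We take the same scalars $(a_n)$ that witness the $mq$-vague convergence and show that, for every real-valued continuous function $f$ with compact support, $a_n\pi_n(f)\to\pi(f)$, which is exactly \eqref{eq.cvqvague}. Write $\nu$ for the common dominating $\sigma$-finite Radon measure. Then
$$
a_n\pi_n(f)=\int f(\theta)\,a_n\pi_n(\theta)\,d\nu(\theta),\qquad \pi(f)=\int f(\theta)\,\pi(\theta)\,d\nu(\theta),
$$
and by hypothesis $a_n\pi_n(\theta)\uparrow\pi(\theta)$ for $\nu$-almost every $\theta$. Consequently $f(\theta)\,a_n\pi_n(\theta)\to f(\theta)\,\pi(\theta)$ for $\nu$-almost every $\theta$.

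To pass to the limit under the integral, we use the dominating function $|f|\,\pi$. Monotonicity gives $0\le a_n\pi_n(\theta)\le \pi(\theta)$ almost everywhere, so $|f\,a_n\pi_n|\le |f|\,\pi$. This bound is integrable: if $K$ is the compact support of $f$, then
$$
\int |f|\,\pi\,d\nu\le \|f\|_\infty\,\pi(K)<\infty,
$$
because $\pi$ is a Radon measure and is therefore finite on compact sets. The dominated convergence theorem then yields $a_n\pi_n(f)\to\pi(f)$, so $a_n\pi_n$ converges vaguely to $\pi$, which is $q$-vague convergence by definition. Alternatively, one can split $f=f^+-f^-$ and apply the monotone convergence theorem to each part, using finiteness of $\pi(f^\pm)$ to subtract the limits.

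The only step that needs care is the domination: it relies on the limit $\pi$ being Radon, i.e.\ locally finite, and on the convergence being monotone from below. If the convergence were not monotone, only pointwise, no integrable bound would be available and the conclusion could fail.

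Uniqueness of the $mq$-vague limit up to a positive scalar then follows from the uniqueness of $q$-vague limits stated earlier in the paper.
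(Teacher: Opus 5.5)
Your proposal is correct and follows the paper's own route: the paper simply calls this lemma a direct consequence of the dominated convergence theorem. Your domination $|f\,a_n\pi_n|\le |f|\,\pi$, which is integrable because $\pi$ is finite on the compact support of $f$, is exactly the argument that remark has in mind.
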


\noindent The converse part of Lemma \ref{lemma.mqqvague} is false in general as show in this example, which also shows that $q$-vague convergence does not implies KL convergence. 

\begin{example}
	\label{example.nonmqv.nonKL}
	Let $\pi_n=\rho \,\delta_0 + (1-\rho)\, \tilde \pi_n$, $0<\rho<1$, where $(\widetilde{\pi}_n)$ is a sequence of probability distributions converging $q$-vaguely to an improper distribution. It is known that $\pi_n$ converges $q$-vaguely to $\delta_0$ \citep[Prop.~2.11]{bidr2016}. The convergence is not $mq$-vague since $\pi(\theta)=\mathds{1}_{\theta=0}$, where the density is defined with respect to the dominating measure $\delta_0+\lambda$, with $\lambda$ denoting the Lebesgue measure.
	Moreover, $(\pi_n)$ is not tight and therefore cannot converge narrowly to $\delta_0$.	In particular, it cannot converge in Kullback-Leibler divergence to any proper distribution.
\end{example}

\noindent Example~\ref{example.nonmqv.nonKL} shows that $q$-vague convergence is too weak to establish a direct connection with KL convergence. Nevertheless, from the perspective of prior distributions, the Jeffreys--Lindley paradox can be naturally described in terms of $q$-vague convergence \citep[Section 7]{bidr2016}, whereas neither $mq$-vague convergence nor KL convergence applies.

~

The notion of $mq$-vague convergence is not overly restrictive, as any improper prior can be approximated in the $mq$-vague sense by a sequence of proper priors.The classical approach of approximating an improper distribution by truncation over an increasing sequence of compact sets is in fact an instance of $mq$-vague approximation, as shown by the following lemma.

\begin{lemma}
	\label{prop.mqvcKn} 
	For an improper distribution $\pi$, let $(K_n)$  be an approximating sequence of compacts. Then, the  sequence of truncated priors  $\pi_n=\frac{1}{\pi(K_n)} 1_{K_n}\pi$  converge $mq$-vaguely  to $\pi$. 
	
\end{lemma}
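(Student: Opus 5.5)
The proof will be a direct check of the definition of $mq$-vague convergence, with the scaling constants chosen to undo the normalization. Take $a_n=\pi(K_n)$, which is finite because $\pi$ is Radon and $K_n$ is compact. It is also positive for all sufficiently large $n$. Indeed, since $K_n\uparrow\Theta$ and $\pi$ is a nonzero measure, continuity from below gives $\pi(K_n)\uparrow\pi(\Theta)>0$. So $\pi(K_n)>0$ from some rank on, and we discard the finitely many earlier indices, where $\pi_n$ is undefined anyway. With this choice, the density of $a_n\pi_n$ with respect to the dominating measure of $\pi$ is
\[
a_n\pi_n(\theta)=\mathds{1}_{K_n}(\theta)\,\pi(\theta).
\]

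Next we check monotonicity. Since $K_n\subset K_{n+1}$, we have $\mathds{1}_{K_n}\le\mathds{1}_{K_{n+1}}$ pointwise. Because $\pi(\theta)\ge 0$, the sequence $\mathds{1}_{K_n}(\theta)\pi(\theta)$ is non-decreasing in $n$ for every $\theta$.

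Then we check the limit. Since $\lim_n K_n=\Theta$ means $\bigcup_n K_n=\Theta$, each $\theta$ belongs to $K_n$ for all $n$ large enough. Hence $\mathds{1}_{K_n}(\theta)\pi(\theta)=\pi(\theta)$ eventually, and so $a_n\pi_n(\theta)\uparrow\pi(\theta)$ for every $\theta$, not merely almost every $\theta$. This is exactly the definition of $mq$-vague convergence. By Lemma \ref{lemma.mqqvague}, $q$-vague convergence follows as well.

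The only point needing care, which is a technicality rather than a real obstacle, is interpreting "approximating sequence" as $\bigcup_n K_n=\Theta$ and handling the initial indices where $\pi(K_n)$ might vanish. If the limit is only meant in a measure sense, for instance $\pi(\Theta\setminus\bigcup_n K_n)=0$, the same argument gives convergence for almost every $\theta$, which the definition still allows. The choice of density version of $\pi$ is immaterial, because the identity above holds for any fixed version.
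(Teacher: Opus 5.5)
Your proof is correct and is exactly the direct verification the paper leaves implicit (the lemma is stated there without proof): with $a_n=\pi(K_n)$ one has $a_n\pi_n=\mathds{1}_{K_n}\pi$, which increases pointwise to $\pi$ because $K_n\uparrow\Theta$. Your handling of the finitely many early indices where $\pi(K_n)$ might vanish is extra care that the paper does not spell out.
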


Truncation is not the only way to approximate $mq$-vaguely an improper prior $\pi$ and many usual approximating sequences convergence are in fact $mq$-vague approximation.  For example the sequence of normal distributions $\mathcal N(0,n)$ converges $mq$-vaguely to the flat distribution  on the real line. Similarly the sequence of beta distributions $\mathcal B(a_n,b_n)$ on $]0,1[$ converges $mq$-vaguely to the Haldane distribution, when $a_n,b_n \downarrow 0$.

  We extend Proposition~\ref{bbsthm1} by proving that $mq$-vague convergence, which is likelihood independent,   entails KL convergence of the posteriors.

 \begin{prop}
 	\label{prop.mqvaguelogcv}
 	Let  $\pi(\theta) >0$ be an improper prior such that the posterior is proper. If the sequence of proper prior $(\pi_{n})$  converges $mq$-vaguely to $\pi$, then the sequence of posteriors distributions $(\pi_{n}(\theta \mid x))$ KL converges  to $\pi(\theta \mid x)$.
 \end{prop}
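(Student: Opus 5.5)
Fix $x$ with $0<p(x)<\infty$ and put $\tilde\pi_n=a_n\pi_n$, so that $\tilde\pi_n(\theta)\uparrow\pi(\theta)$ for almost every $\theta$. Since $a_n$ cancels in Bayes' formula, $\pi_n(\theta\mid x)=p(x\mid\theta)\tilde\pi_n(\theta)/\tilde p_n(x)$, where $\tilde p_n(x)=\int p(x\mid\theta)\tilde\pi_n(\theta)\,d\theta=a_np_n(x)$. The plan is to write the KL divergence explicitly and control it by monotone convergence only. Substituting both posteriors, the likelihood factors cancel inside the logarithm, and on $\{\tilde\pi_n>0\}$ we get
\begin{equation*}
\operatorname{KL}\left(\pi_n(\cdot\mid x)\,\|\,\pi(\cdot\mid x)\right)=\int \pi_n(\theta\mid x)\log\frac{\tilde\pi_n(\theta)}{\pi(\theta)}\,d\theta+\log\frac{p(x)}{\tilde p_n(x)} .
\end{equation*}
Because $\tilde\pi_n\le\pi$ almost everywhere, the first integral is $\le 0$. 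Using $\pi>0$, the ratio $\tilde\pi_n/\pi$ is well defined and lies in $[0,1]$. This gives
\begin{equation*}
0\le \operatorname{KL}\left(\pi_n(\cdot\mid x)\,\|\,\pi(\cdot\mid x)\right)\le \log\frac{p(x)}{\tilde p_n(x)} .
\end{equation*}

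It remains to show $\tilde p_n(x)\to p(x)$. The integrands $p(x\mid\theta)\tilde\pi_n(\theta)$ are nonnegative and increase almost everywhere to $p(x\mid\theta)\pi(\theta)$. By the monotone convergence theorem, $\tilde p_n(x)\uparrow p(x)$, and the limit is finite by posterior propriety. Moreover $\tilde p_n(x)>0$ for $n$ large, since the limit $p(x)$ is positive. Hence the upper bound tends to $\log 1=0$, and the KL divergence tends to $0$.

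The main obstacle is the first step, namely justifying the decomposition when the integrals may be infinite or involve $0\log 0$. I would handle this directly on the integrand. Let $r_n=\tilde\pi_n/\pi\in[0,1]$. Then
\begin{equation*}
\pi_n(\theta\mid x)\log\frac{\pi_n(\theta\mid x)}{\pi(\theta\mid x)}=\pi_n(\theta\mid x)\left[\log r_n(\theta)+\log\frac{p(x)}{\tilde p_n(x)}\right],
\end{equation*}
with the convention $0\log0=0$ where $r_n=0$. The first term is nonpositive and the second is a constant, so the integral is well defined in $[-\infty,\infty)$ and the inequality holds. Since KL is always nonnegative, the integral of the negative part is automatically finite.

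A secondary point is null sets, both where $\pi$ vanishes and where the monotonicity fails. The hypothesis $\pi>0$ removes the first issue, and the second is handled because all statements are almost everywhere with respect to the dominating measure. Note that the argument never uses compactness, so Proposition~\ref{bbsthm1} is recovered as the special case given by Lemma~\ref{prop.mqvcKn}.
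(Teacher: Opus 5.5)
Your proof is correct and follows essentially the same route as the paper's: you bound the prior log-ratio $\log(a_n\pi_n/\pi)$ by $0$ to get $\operatorname{KL}\le\log\bigl(p(x)/(a_np_n(x))\bigr)$, then apply monotone convergence to $a_np_n(x)\uparrow p(x)$. Your extra care with splitting the integrand, the $0\log 0$ convention and null sets makes explicit details that the paper leaves implicit.
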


 The proof can be found in  the Appendix \ref{appendix.preuvemqvaguelogcv} or in \cite{bioche2015}. Combining     Lemma \ref{prop.mqvcKn} and Prop. \ref{prop.mqvaguelogcv} gives an   alternative proof of Prop. \ref{bbsthm1}. 
  The converse of Proposition~\ref{prop.mqvaguelogcv} does not hold in general: in Example \ref{example.nonuniqueness}, let $\pi_n$ denote the Poisson distribution with parameter $n$. It is known \citep[see][]{bidr2016} that $\pi_n$ converges neither $q$-vaguely nor $mq$-vaguely to any prior $\pi(\theta)$, whereas the corresponding posteriors $\pi_n(\theta\mid x)$ KL converge  to $\pi(\theta\mid x)$.

 \medskip 
 
The main motivation for introducing the notion of $mq$-vague convergence is that it entails the $mq$-vague convergence of the   marginal distributions when the limiting marginal distribution is well-defined.

\begin{prop}
	\label{prop.marginal.convergence}
	Let  $(\pi_n)$  be a sequence that converges $mq$-vaguely  to $\pi$, and assume that the marginal $p(x)=\int p(x\mid \theta) \pi(\theta) d\theta$ is a non-zero Radon measure, then the sequence $(p_n)$ of   marginal distributions associated to $\pi_n$ converges $mq$-vaguely to $p$.
\end{prop}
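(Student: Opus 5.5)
The plan is to use the same scaling constants $(a_n)$ that witness the $mq$-vague convergence of the priors, and to pass monotonicity through the integral defining the marginal by the monotone convergence theorem. By assumption there exist $a_n>0$ such that $a_n\pi_n(\theta)\uparrow\pi(\theta)$ for almost every $\theta$ with respect to the dominating measure. For each fixed $x$, multiply by the nonnegative likelihood:
\[
0\le a_n\,p(x\mid\theta)\,\pi_n(\theta)\uparrow p(x\mid\theta)\,\pi(\theta)
\]
for almost every $\theta$. The null set on which monotone convergence fails does not depend on $x$, so this holds simultaneously for all $x$.

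Integrating in $\theta$ and applying the monotone convergence theorem gives, for every $x$,
\[
a_n\,p_n(x)=\int p(x\mid\theta)\,a_n\pi_n(\theta)\,d\theta\ \uparrow\ \int p(x\mid\theta)\,\pi(\theta)\,d\theta=p(x).
\]
The sequence $a_np_n(x)$ is non-decreasing in $n$ because the integrands are. The limit may be $+\infty$ for some $x$, but monotone convergence still applies. Since $p$ is assumed to be a non-zero Radon measure, $p(x)<\infty$ almost everywhere with respect to the dominating measure on the sample space. Also $p_n$ is a proper density, since $\int p_n(x)\,dx=\int\pi_n=1$ by Tonelli. Hence $a_np_n\uparrow p$ pointwise, which is exactly $mq$-vague convergence of $(p_n)$ to $p$ with the same constants $a_n$.

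The only delicate point is measure-theoretic bookkeeping. First, the almost-everywhere set in $\theta$ must be handled uniformly in $x$; this works because the exceptional set comes from the priors alone and is independent of the likelihood. Second, joint measurability of $(x,\theta)\mapsto p(x\mid\theta)\pi_n(\theta)$ is needed so that Tonelli applies; this is standard for a measurable likelihood. Finally, if one also wants $q$-vague convergence of the marginals, Lemma~\ref{lemma.mqqvague} applied on the sample space yields it directly, since $p$ is a Radon measure.
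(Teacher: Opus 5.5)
Your proposal is correct and matches the paper's argument: the paper only says that the result ``follows directly from the monotone convergence theorem.'' That is exactly your step of keeping the same constants $a_n$ and integrating $a_n\,p(x\mid\theta)\,\pi_n(\theta)\uparrow p(x\mid\theta)\,\pi(\theta)$ in $\theta$. The rest of what you write (the exceptional $\theta$-set being independent of $x$, Tonelli for the propriety of $p_n$) fills in details the paper leaves implicit.
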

 
\noindent Proposition~\ref{prop.marginal.convergence} follows directly from the monotone
convergence theorem. Note that when $\pi$ is improper, the corresponding marginal measure $p(x)$ is also improper and most often a Radon measure, though not necessarily so, see \cite{lindqvisttaraldsen2018} or example 2.19 in  \cite{bergerbernardosun2024}.



\subsection{Integrated convergence modes}
For a sequence $(\pi_n)$ of proper priors and an improper distribution $\pi$, pointwise convergence of the posteriors $\pi_n(\cdot\mid x)$ to $\pi(\cdot\mid x)$ for each fixed $x$ may be too weak when improper priors are viewed as limits of proper priors. Indeed,   it does not
control the frequentist behaviour of the resulting inference, since frequentist
assessment involves repeated sampling and hence varying values of the
observation $x$. This issue is closely related to earlier work on improper priors and limiting Bayesian procedures \citep{stone1965,stone1970,stonedawid1972,dawidstonezidek1973,dawidstone1973}. A way to treat this issue is to use an overall criterion by integrating out a distance or divergence between 
$\pi_n(\cdot\mid x)$ and $\pi(\cdot\mid x)$ w.r.t. the marginal distribution. We consider here two cases : the first one is based on the  Kullback-Leibler divergence. It was introduced by \citet{bergerbernardosun2009}  when  $\pi_n$ are defined by truncation of a improper prior by a sequence of approximating compact sequence. The second one is based on total variation distance and  was considered in the  setting of invariant models by \cite{stone1970} and \cite{wallstrom2007} under the name  \textit{probability limit} of posterior distributions. It will be useful to study inconsistency  between Bayes et frequentist inference.  
 \subsubsection{EKL convergence and KL permissible priors}
 
 Let $(\pi_n)$ be a sequence of proper priors s that converges $mq$-vaguely to  $\pi$, with   an improper distribution such that the posterior is proper. Denote  $p_n(x)=\int p(x\mid\theta)\,\pi_n(\theta)\,\mathrm{d}\theta
 $ the marginal distribution.
  The expected Kullback--Leibler (EKL) divergence between the posteriors  $\pi_n(\cdot\mid x)$ and $\pi(\cdot\mid x)$ associated to $\pi_n$ and $\pi$  is defined by 
\begin{align}
	\operatorname{EKL}(\pi_n,\pi)
	&= \int \operatorname{KL}\!\big(\pi_n(\cdot\mid x)\,\|\,\pi(\cdot\mid x)\big)\, p_n(x)\,\mathrm{d}x  
	.
\end{align}
A sequence \((\pi_n(\cdot\mid x))\) that converges $mq$-vaguely to $\pi$ is said to be   \emph{EKL convergent} \citep{bergerbernardosun2009} to \(\pi(\cdot\mid x)\) if

\begin{equation}
	\lim_{n\to\infty} \operatorname{EKL}(\pi_n,\pi)=0.
\end{equation}

When $(\pi_n)$ is obtained by restricting $\pi$ to an increasing sequence of compact sets, \citet{bergerbernardosun2009} informally conjectured that EKL convergence does not depend on the particular approximating sequence. A counterexample of this conjecture was given in \cite[Section~5]{drusai2021} for a model with a partially deterministic structure. We present here another counterexample based on the same general mechanism: the model behaves well near one boundary of the parameter space, but not near the other.  
  
  \begin{example}
  	\label{example.contreexemplenormale}
  	
  	Let $X\mid\theta\sim \mathcal N(\theta,1)$, and consider the improper prior $
  	\pi(\theta)\propto 1+e^{\theta}$, $\theta\in\mathbb R.
  	$
  Define the two sequences of approximating sequence of compact sets :  
  	$$
  	K_n^{(1)}=[-e^{2n},n],
  	\qquad
  	K_n^{(2)}=[-n,,n].
  	$$
  Let $\pi_n^{(1)}$ and $\pi_n^{(2)}$ denote the corresponding truncated versions of the prior $\pi$.  Then $\pi_n^{(1)}$ is EKL convergent to $\pi$ but not  $\pi_n^{(2)}$.   The proof can be found in Appendix  \ref{appendix.contreexemplenormale}. The failure of EKL convergence is driven by the concentration of the marginal mass $(p_n(x))$, which depends strongly on how the prior mass accumulates near the two boundaries, $-\infty$ and $+\infty$. By contrast, this boundary concentration has a much weaker effect on the KL divergence between the corresponding posterior distributions. It will be shown in Section \ref{section.weakinconsistency} that the improper prior $\pi$ used in this example leads to  weak inconsistency.

  \end{example} 
Another counterexample  in the  discrete    case  can be found in   Section~\ref{section.generic.exemple.ELC.compact}. It also follows the same scheme, but the way the mass concentrate on the boundary is more subtle.

~

An improper prior $\pi$ is said to be \emph{permissible}, in the sense of \citet{bergerbernardosun2009}, if there exists at least one approximating sequence of compact sets for which the corresponding posterior distributions is EKL convergent . In this paper, we extend this notion by allowing sequences of proper
priors that converge $mq$-vaguely to $\pi$, while retaining the
requirement of EKL convergence of the corresponding posteriors.
We refer to this extended notion as \emph{KL-permissibility}.

   Based on some examples, the authors suggested that permissibility might rule out strong inconsistency. One of the main results of this paper is to establish that this conjecture is indeed correct, as shown in Section~\ref{section.SI.vs.ELC.ensemble}. 



\subsubsection{Expected Total Variation (ETV)  convergence and TV permissible priors}

Let $\pi$ be an improper prior that yields   proper posteriors, and let $(\pi_n)$ be a sequence of proper priors that converges $mq$-vaguely to  $\pi$. We consider here an analogue of EKL convergence based on the total variation distance. This notion will be useful for establishing a connection between approximations of improper priors and strong inconsistency in Section \ref{section.InconsitencyetELC}. We define \textit{expected total variation distance} (ETV) between the posteriors $\pi_n(\cdot\mid x)$ and $\pi(\cdot\mid x)$ by
\begin{align}
	\operatorname{ETV}(\pi_n,\pi)
	&= \int \operatorname{d}_{\operatorname{TV}}\!\big(\pi_n(\cdot\mid x)\,\|\,\pi(\cdot\mid x)\big)\, p_n(x)\,\mathrm{d}x  
	.
\end{align}

\noindent where $\operatorname{d}_{\operatorname{TV}}(P,Q)$ is the total variation distance between   two probability distributions $P$ and $Q$ defined by

\begin{equation}
\label{eq.def.DTV}
\operatorname{d}_{\operatorname{TV}}(P,Q)=  \sup _{A \in \mathcal{A}}|P(A)-Q(A)|= \frac{1}{2} \sup_{ \|h\|_\infty\leq 1} \int  h(y)(p(y)-q(y)) dy.
\end{equation}

A sequence \((\pi_n(\cdot\mid x))\) that converges $mq$-vaguely to $\pi$ is said to be {ETV}  convergent  if

\begin{equation}
	\lim_{n\to\infty} \operatorname{ETV}(\pi_n,\pi)=0.
\end{equation}
Similarly to KL-permissibility, an improper prior $\pi$ is said to be
\emph{TV-permissible} if there exists a sequence of proper priors
converging $mq$-vaguely to $\pi$ such that the corresponding posteriors
converge in ETV.

The following lemma, which follows directly from Pinsker's inequality, \(\mathrm{KL}(Q \Vert P) \ge 2\,\operatorname{d}_{\mathrm{TV}}(Q,P)^2\) establishes a connection between KL-permissibility and TV-permissibility.
\begin{prop}
	\label{prop.EKLimpliesETV}
  EKL convergence implies  ETV convergence and therefore, a KL-permissible improper  prior is also TV-permissible.
\end{prop}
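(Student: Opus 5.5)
Fix an improper prior $\pi$ with proper posteriors and a sequence $(\pi_n)$ of proper priors converging $mq$-vaguely to $\pi$. The plan is to bound $\operatorname{ETV}(\pi_n,\pi)$ by a function of $\operatorname{EKL}(\pi_n,\pi)$, with the same sequence $(\pi_n)$ and the same weighting measure $p_n$. The second claim then follows from the definitions: if $\pi$ is KL-permissible, the sequence witnessing EKL convergence also witnesses ETV convergence, so $\pi$ is TV-permissible.

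\emph{Step 1 (pointwise bound).} For each $x$ with $p_n(x)>0$, both $\pi_n(\cdot\mid x)$ and $\pi(\cdot\mid x)$ are probability distributions. Pinsker's inequality gives
\begin{equation*}
\operatorname{d}_{\operatorname{TV}}\big(\pi_n(\cdot\mid x),\pi(\cdot\mid x)\big)\le \sqrt{\tfrac12\,\operatorname{KL}\big(\pi_n(\cdot\mid x)\,\|\,\pi(\cdot\mid x)\big)}.
\end{equation*}
This holds trivially when the KL divergence is infinite. Note that the convention for $\operatorname{d}_{\operatorname{TV}}$ in \eqref{eq.def.DTV} is the $\sup_A|P(A)-Q(A)|$ one, which is exactly the normalization under which Pinsker's inequality reads $\mathrm{KL}\ge 2\,\operatorname{d}_{\mathrm{TV}}^2$.

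\emph{Step 2 (integration).} Since $\pi_n$ is proper, $p_n$ is a probability density in $x$. I integrate the bound of Step 1 against $p_n(x)\,\mathrm{d}x$ and apply Jensen's inequality (concavity of $t\mapsto\sqrt t$, equivalently Cauchy--Schwarz) under the probability measure $p_n$. This gives
\begin{equation*}
\operatorname{ETV}(\pi_n,\pi)\le \int \sqrt{\tfrac12\operatorname{KL}\big(\pi_n(\cdot\mid x)\|\pi(\cdot\mid x)\big)}\,p_n(x)\,\mathrm{d}x\le \sqrt{\tfrac12\operatorname{EKL}(\pi_n,\pi)}.
\end{equation*}
Hence $\operatorname{EKL}(\pi_n,\pi)\to0$ forces $\operatorname{ETV}(\pi_n,\pi)\to0$.

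\emph{Main point of care.} The only delicate step is using Jensen with respect to $p_n$. This requires $p_n$ to be a probability measure, which is where properness of $\pi_n$ is used; it would fail if the weighting were the improper marginal $p$. Measurability of $x\mapsto \operatorname{d}_{\operatorname{TV}}$ and $x\mapsto\operatorname{KL}$ is routine, since both are integrals of jointly measurable nonnegative functions. Integrals of nonnegative functions also make sense in $[0,\infty]$, so no finiteness assumption on EKL is needed beyond the limit being taken.
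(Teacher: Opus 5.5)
Your proof is correct and follows the paper's own argument. The paper states that the proposition follows directly from Pinsker's inequality, and in the proof of Corollary~\ref{corol.ECL.min} it uses exactly the integrated bound $\operatorname{EKL}(\pi_n,\pi)\ge 2\,\operatorname{ETV}(\pi_n,\pi)^2$ that your pointwise Pinsker step plus Jensen's inequality under the probability measure $p_n$ makes explicit; the permissibility claim then follows, as you say, by reusing the same approximating sequence.
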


\subsubsection{Convergence mode when the  limit prior $\pi$ is proper}
As a complement, we study here the relationship between the convergence notions introduced above in the case when the limit distribution  $\pi$ is  proper rather than improper.

\begin{prop}
	\label{prop.ETV.EKL.proper}
	If  the sequence $\pi_n$ of  proper distributions converges $mq$-vaguely to  a proper distribution $\pi$, then the sequence of posterior $\pi_n(\theta\mid x)$ is EKL and ETV  convergent. 
\end{prop}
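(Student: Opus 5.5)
The plan is to use the chain rule for Kullback--Leibler divergence on the joint distribution of $(\theta,x)$, so that $\operatorname{EKL}(\pi_n,\pi)$ is bounded by the prior divergence $\operatorname{KL}(\pi_n\|\pi)$. That prior divergence is then controlled by the normalising constants $a_n$ from the $mq$-vague convergence.

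First I normalise $\pi$ to be a probability density. By the definition of $mq$-vague convergence there are $a_n>0$ with $a_n\pi_n(\theta)\uparrow\pi(\theta)$ for almost every $\theta$.
\begin{itemize}
\item Since $a_n\pi_n\le\pi$, integrating gives $a_n\le 1$.
\item By monotone convergence, $a_n=\int a_n\pi_n\to\int\pi=1$.
\item The inequality $a_n\pi_n\le \pi$ also shows $\pi_n\ll\pi$. Consequently $p_n\ll p$, and $\pi_n(\cdot\mid x)\ll\pi(\cdot\mid x)$ for $p_n$-almost every $x$. So every divergence below is well defined and all logarithms are taken on sets where the denominators are positive.
\end{itemize}

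Next I write both posteriors explicitly: $\pi_n(\theta\mid x)=p(x\mid\theta)\pi_n(\theta)/p_n(x)$ and $\pi(\theta\mid x)=p(x\mid\theta)\pi(\theta)/p(x)$. The likelihood cancels in their ratio, giving
$$\log\frac{\pi_n(\theta\mid x)}{\pi(\theta\mid x)}=\log\frac{\pi_n(\theta)}{\pi(\theta)}-\log\frac{p_n(x)}{p(x)}.$$
I integrate this against the joint density $p(x\mid\theta)\pi_n(\theta)=\pi_n(\theta\mid x)p_n(x)$ and use Fubini. This yields the decomposition
$$\operatorname{EKL}(\pi_n,\pi)=\operatorname{KL}(\pi_n\|\pi)-\operatorname{KL}(p_n\|p)\le \operatorname{KL}(\pi_n\|\pi).$$
To bound the right-hand side, set $r_n=a_n\pi_n/\pi\le 1$. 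Then $\log(\pi_n/\pi)=\log r_n-\log a_n\le-\log a_n$, and hence $\operatorname{KL}(\pi_n\|\pi)\le -\log a_n\to0$. This proves EKL convergence. ETV convergence then follows from Proposition~\ref{prop.EKLimpliesETV}, that is, Pinsker's inequality combined with Jensen: $\operatorname{ETV}\le\sqrt{\operatorname{EKL}/2}$.

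The main obstacle is justifying the splitting of the integral, since $\operatorname{KL}(p_n\|p)$ or pieces of the integrand could a priori be infinite or of indefinite sign. I would avoid subtraction altogether and argue as follows.
\begin{itemize}
\item The joint measures are $P_n=\pi_n(\theta)p(x\mid\theta)$ and $P=\pi(\theta)p(x\mid\theta)$.
\item The chain rule gives $\operatorname{KL}(P_n\|P)=\operatorname{KL}(p_n\|p)+\operatorname{EKL}(\pi_n,\pi)$ when disintegrating over $x$.
\item Disintegrating over $\theta$ instead, the conditionals of $X$ given $\theta$ coincide, so $\operatorname{KL}(P_n\|P)=\operatorname{KL}(\pi_n\|\pi)$.
\end{itemize}
All terms in this identity are nonnegative, which sidesteps the sign issue. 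Moreover $\operatorname{KL}(P_n\|P)$ is finite, because the log-ratio $\log(\pi_n/\pi)$ is bounded above by $-\log a_n$ and the negative part of a KL integrand is always integrable. This makes the Fubini and chain-rule steps legitimate.
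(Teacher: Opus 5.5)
Your proof is correct and follows the paper's route: you use the same decomposition $\operatorname{EKL}(\pi_n,\pi)=\operatorname{KL}(\pi_n\|\pi)-\operatorname{KL}(p_n\|p)$ and the same bound $\operatorname{KL}(\pi_n\|\pi)\le-\log a_n$. The differences are only in the supporting steps: you get $a_n\to1$ directly from monotone convergence rather than from the paper's compactly supported test-function argument, you justify the decomposition by the KL chain rule on the joint law rather than the appendix's explicit Fubini--Tonelli bounding, and you spell out the ETV conclusion via Pinsker and Jensen, which the paper's proof leaves implicit.
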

\begin{proof}
	Let $a_n>0$ be scalars such that $a_n\pi_n(\theta) \nearrow \pi(\theta)$. Since $a_n\pi_n(\theta) \leq \pi(\theta)$, it can be checked that  $\operatorname{KL}(\pi_n \| \pi)<+\infty$. Moreover,  we have  (see Appendix \ref{appendix.equation.KL}) 
	
		\begin{equation}\label{eq.EKLn.propre}
	\operatorname{EKL}(\pi_n,\pi) = \operatorname{KL}\left(\pi_n  \| \pi \right)
	-\operatorname{KL}\left(p_n   \|  p\right).
	\end{equation}
	and therefore  $\operatorname{EKL}(\pi_n,\pi) \leq  \operatorname{KL}\left(\pi_n  \| \pi \right)
	$. To complete the proof, we  show that  $\operatorname{KL}\left(\pi_n  \| \pi \right)$ converges to $0$: 	for any $\varepsilon$, there exists a function $0\leq f(\theta)\leq 1$ with compact support such that $\int f(\theta) d\pi(\theta)\geq 1-\varepsilon$. There also exists $N$ such that, for $n>N$,  $\int a_n f(\theta) \pi_n(\theta) \geq \int f(\theta)d\pi(\theta)-\varepsilon $. Then,
	$$
	1-2\varepsilon \leq \int a_n\; f(\theta) \; \pi_n(\theta) d\theta\leq\int a_n\;\pi_{n}(\theta) \;d\theta= a_n \leq \int \pi(\theta)\;d\theta=1
	$$	 
	and then  $	a_n$ converges to $1$.  Moreover,
	
	\begin{align*}
	\operatorname{KL(\pi_n\|\pi)} &=\int \pi_n(\theta) \log \left(\frac{1}{a_n} \frac{a_n\pi_n(\theta)}{\pi(\theta)}\right) \;d\theta\\
	&=-\log(a_n)- \int\pi_n(\theta)  \log \left(  \frac{\pi(\theta)}{a_n\pi_n(\theta)}\right) \;d\theta
	\\
	&\leq -\log(a_n)\\
	&  \xrightarrow{n\rightarrow+\infty}0
	\end{align*}

\end{proof}

The following example, inspired by the Jeffreys-Lindley paradox, considers a situation in which the priors converge $q$-vaguely but not $mq$-vaguely. The corresponding posterior distributions converge in total variation but not in Kullback-Leibler divergence. Moreover, neither EKL nor ETV convergence holds. This example therefore shows that $q$-vague convergence alone is too weak to ensure KL convergence of the posterior distributions. This is one of the reasons why $mq$-vague convergence is imposed in our definitions of EKL and ETV convergence.

\begin{example}[Example \ref{exemple.jeffreyslindley} continued]
	
	\label{example.jeffreys.nonEKL.nonETV}

Consider
$
X\mid\theta\sim\mathcal N(\theta,1)$,
$
\pi_n=\rho\delta_0+(1-\rho)\mathcal N(0,n)$,
 $0<\rho<1,
$
We have seen in Examples \ref{exemple.jeffreyslindley}  that  $\pi_n$ converges q-vaguely $\pi=\delta_0$ and the posteriors converge pointwise q-vaguely to $\delta_0$  The marginal
density is 
$
p_n(x)=\rho\varphi_1(x)+(1-\rho)\varphi_{n+1}(x),
$
and the posterior  is 

\begin{equation}
	\pi_n(\,\cdot\mid x)
=
w_n(x)\delta_0
+
\bigl(1-w_n(x)\bigr)
\mathcal N\!\left(
\frac{n}{n+1}x,\frac{n}{n+1}
\right),
\qquad \textrm{with }
w_n(x)=\frac{\rho\varphi_1(x)}{p_n(x)}.
\end{equation}

  Since $\pi_n(\,\cdot\mid x)$ is not absolutely continuous
with respect to $\pi(\,\cdot\mid x)=\delta_0$,
 $$\operatorname{KL}\bigl(
\pi_n(\,\cdot\mid x)\,\Vert\,\delta_0
\bigr)=+\infty,
$$
  and therefore there is no EKL convergence.\\
Consider now total variation distance. 
For every fixed $x$, $\varphi_{n+1}(x)\xrightarrow[n\to +\infty]{}0$, so $w_n(x)\xrightarrow[n\to +\infty]{}1$ and then 
$$
d_{\mathrm{TV}}\bigl(\pi_n(\,\cdot\mid x),\delta_0\bigr)
=1-w_n(x) \xrightarrow[n\to +\infty]{}0.
$$
Thus the posteriors converge pointwise in total variation.
However, ETV convergence   fails, since
\[
\begin{aligned}
\operatorname{ETV}(\pi_n,\delta_0)
&=\int_{\mathbb R}\bigl(1-w_n(x)\bigr)p_n(x)\,dx
=(1-\rho)\int_{\mathbb R}\varphi_{n+1}(x)\,dx 
=1-\rho.
\end{aligned}
\]

\end{example}

\section{Inconsistencies and permissible prior}
\label{section.InconsitencyetELC}

In this section, we discuss two forms of inconsistency between Bayesian and frequentist inference : strong inconsistency which is well documented in the literature and weak inconsistency, which, like strong inconsistency, cannot occur under a proper prior. Our main result is that a permissible prior cannot lead to strong inconsistency, although it may still lead to weak inconsistency. The latter case is of particular interest, since it allows us to construct examples in which EKL convergence depends on the choice of the sequence of compact sets.

\subsection{Two examples of stong inconsistency}

 We present two classical examples in discrete models. Although they arise in apparently different settings, we show that they can be represented within the same general framework, based on a common structure of the sampling model and the associated improper prior. This framework will be studied in greater details in Section~\ref{section.generic.mmodel}, where several variants will also be considered.

\subsubsection{The flatland paradox}
\label{section.flatlandmodel}
The flatland paradox was  introduced by \citet{stone1976}. It is a Bayesian version of an example of  strong inconsistency of a non-adimissible best  equivariant estimator, previously   proposed in \citet{lehm1959}.   

~

This paradox can be illustrated in various ways. Here, we give a presentation proposed by  \citet{stone1982}. Imagine a tetrahedral die being tossed an unknown number  of times $N$, which is possibly  large. The die's faces are labeled  $a$, $b$, $a^{-1}$ and $b^{-1}$. After each toss, the outcome is recorded, with the rule that if the current outcome is the inverse of the previous one, both outcomes are eliminated, that is, they cancel each other out. As a result, we obtain an non-observed  path, denoted by $\theta$, in which no consecutive inverse symbols appear. Then,  an additional toss is performed. The resulting path, denoted by $x$, is recorded according to the same rule.
For example if $\theta=(a,b,a^{-1},b)$, then  $x$ is one the the four possible paths  $(a,b,a^{-1},b,a)$, $(a,b,a^{-1},b,a^{-1})$   $(a,b,a^{-1},b,b)$ or $(a,b,a^{-1})$, each occurring with probability $1/4$,   according as the last toss is $a$, $a^{-1}$, $b$, or $b^{-1}$, respectively.  In the last case, there is \textit{annihilation}, i.e. the two final inverse symbols cancel each other, so that the length of $x$ decreases by one rather than increasing by one.

Given the observation of \( x \), a statistician aim to infer the unobserved path \( \theta \). Since this model corresponds to the free group generated by the elements \( a \) and \( b \), a flat prior on the path $\theta$ is chosen, which corresponds to the right Haar measure of the group.  

Let $A$ denote the event that annihilation occurs at the additional toss, that is, that the supplementary step is a backward step. From a frequentist perspective, conditional on $\theta$, the probability of annihilation is $1/4$, except when $\theta$ is the null path, in which case it is $0$. Hence,
$
\mathbb{P}(A\mid\theta)\leq \frac{1}{4}
\qquad \text{for all } \theta.
$

From a Bayesian perspective, under the flat prior $\pi$ on $\theta$, the posterior probability of annihilation given $x$ is $3/4$, except when $x$ is the null path, in which case it is $1$. Therefore,
$
\pi(A\mid x)\geq \frac{3}{4}
$ 
for all $x$  and  

\begin{equation}
	\sup_{\theta\in\Theta} P(A\mid \theta) \leq \inf_{x\in\mathcal{X}} \pi(A\mid x)     - \frac{1}{2},
\end{equation}

This discrepancy between the frequentist and Bayesian assessments is referred to as \textit{strong inconsistency}. Such a phenomenon cannot occur under a proper prior, since averaging $\mathbb{P}(A\mid\theta)$ with respect to $\pi(\theta)$ and $\pi(A\mid x)$ with respect to the marginal distribution $p(x)$ must give the same value. \citet{eatonsudderth1999,eatonsudderth2002} showed that strong inconsistency cannot occur when the underlying group is amenable. This condition does not hold here, since the free group is non-amenable.

One way to understand this inconsistency from an inferential perspective is to observe that the flat prior is highly informative when expressed in terms of path lengths. Let $\ell(\theta)$ denote the length of the path $\theta$. The prior induced on $\ell(\theta)$ by the flat prior on $\theta$ satisfies
$
\pi(\ell)=4\cdot  3^{\ell-1},
$
with $\pi(0)=1$ and therefore strongly favors paths of large length rather than being flat,  \citep{Hill1979,druilhet2017}. When there is annihilation, we have $\ell(x)=\ell(\theta)-1$. Therefore,  $\mathbb P( A\mid \theta ) =\mathbb P( \ell(X) = \ell(\theta)-1\mid \theta ) \leq 1/4$ for any $\theta$ whereas $\mathbb P( A\mid x ) =\pi( \ell(x) = \ell(\theta)-1 \mid x) \geq 3/4$ for any  $x$ obtained from  the highly informative prior on $\ell$.

~

\subsubsection{The Fraser, Monette, Ng paradox} 
  \label{section.frasermonettemodel}
  
  The following example of strong inconsistency was introduced by \citet{frasermonetteng1985} and later discussed by \citet{joshi1989} and \citet{GoldHowa1991} from the perspective of the likelihood principle. We show that, despite arising in a completely different setting, it follows the same underlying structure as the Flatland paradox. 
  
\noindent    For any   integers $x\geq 1$ and $\theta\geq 1$   
define the likelihood 
\begin{equation}
	 p(x \vert \theta ) = 
\left\{ 
\begin{array}{cc}
\frac{1}{3} & \text{ if }  
x \in \left\{\left[\frac{\theta}{2}\right], \, 2 \theta, \, 2 \theta +1\right\} \\[5pt]
0 & \text{ otherwise}
\end{array}
\right.
\end{equation}

where $[\theta/2]$  denote the integer part of $\theta/2$, with the convention that $[1/2]=1$.
We may note that, for $x,\theta\geq 1$, we have the equivalence~:
\begin{equation}
	\textstyle
\left\{ (x,\theta) \;;\;  x =\left[\frac{\theta}{2}\right], \, 2 \theta, \textrm{ or }  2 \theta +1  \right\}
= 
\left\{(x,\theta) \;;\;  \; \theta =\left[\frac{x}{2}\right], \, 2 x, \textrm{ or } \, 2 x +1  \right\}.
\end{equation}
Consider the flat prior $\pi(\theta)\propto 1$ on $\theta$.  Since, for any given $\theta$, the probability that $x>\theta$ is equal  to $2/3$ whereas given $x$, the same probability is equal to  $1/3$. 
Choosing  $ A =\{ (x,\theta) \;;\;   x = \left[ \theta/2\right] \}$, we have for all $\theta\geq 1$, $
P(A\mid \theta) = P (x = \left[ \theta/2\right] \mid \theta )=1/3
$ 
whereas for all $x\geq 1$,
$
\pi(A\mid x)=\pi ( \theta \in \{ 2x \, , \, 2x+1 \}  \mid x )  =2/3
$
Thus, the event $A$ has conditional probability $1/3$ for every $\theta$, while its posterior probability is $2/3$ for every $x$, providing another instance of strong inconsistency:

\begin{equation}
	\sup_{\theta\in\Theta} P(A\mid \theta) \leq \inf_{x\in\mathcal{X}} \pi(A\mid x)     - \frac{1}{3}.
\end{equation}

To see that this paradox is  closely related    to the  flatland paradox, consider  the binary   expansion of  $x$ and $\theta$. With this representation,   
the binary expansion of $x=\left[ \theta/2\right]$ is  obtained by removing the last digit in the binary expansion of $\theta$. In the same way, 
$x=2\theta$ and $x=2\,\theta +1$ are the two numbers whose binaries expansions are respectively obtained by adding a $0$ or a $1$ at the right end  of the binary expansion of $\theta$. For example, if $\theta=5$, whose binary expansion is $(101)$, then $2\theta$, $2\theta+1$, and $\left\lfloor \theta/2 \right\rfloor$ have binary expansions $(1010)$, $(1011)$, and $(10)$, respectively.

Denotes by $\ell(n ) \geq 1$ the length of the binary expansion of a non zero integer $n$.  The prior on $\ell(\theta)$ induced by the flat prior on $\theta$ is $\pi(\ell) \propto 2^{\ell-1}$. We have,    for $\theta > 1$, $\mathbb P( \ell(X) = \ell(\theta)+1 \mid \theta ) = 2/3$ while $\pi( \ell(x) = \ell(\theta)-1 \mid x) = 1/3$. . This is directly analogous to the Flatland paradox, where the flat prior on paths induced $\pi(\ell)\propto 3^{\ell-1}$ for the path lengths.

\subsection{Inconsistencies between frequentist and Bayes inference}
We define two notions of inconsistency between frequentist and Bayesian inference that generalize the two examples above. Each notion is formulated in two   ways: a functional formulation and a set-based formulation.

   We consider a statistical model $X \mid\theta \sim P_\theta$, with  $p(x\mid\theta)$ the likelihood. 
Let $\pi(\theta) $ be  a prior on $\theta$  such that the posteriors $\pi(\theta\mid x)$  are proper for all $x$. 

\subsubsection{Strong inconcistency}

\noindent
Let $ h $ be measurable   function  on $\Theta \times {\mathcal X} $ such that $\|h\|_\infty\leq 1$ with $ \|h\|_\infty =\sup_{\theta,x} |h(x)|$ and define

\begin{equation}
	\beta_h = \inf_{x\in\mathcal{X}} \int_{\Theta} h(\theta,x) \pi(\theta \mid x)\; d\theta \;,\; \alpha_h =\sup_{\theta\in\Theta} \int_{\mathcal X} h(\theta,x) p(x \mid \theta)   \;dx.
\end{equation}

The  bayesian inference induced by $\pi$ is said to be \textit{strongly inconsistent}  if there exists such a function satisfying 
$\delta_h=\beta_h - \alpha_h>0 $.
The \textit{extent of inconsistency }is defined by   
\begin{equation}
\label{eq.delta.definition}
 \delta^* = \sup_{ \|h\|_\infty\leq 1} (\beta_h - \alpha_h).
\end{equation}
 Therefore, the bayesian inference is  strongly inconsistent if and  only if  $\delta^*>0$.

 We consider  also a set-based definition of  strong inconsistency:
 let \(A\)  a measurable subset of \(\mathcal{X} \times \Theta\). Let  define the sections
 
\begin{equation}
	 A^\theta = \{x \in \mathcal{X} \; ; \; (x,\theta) \in A\}, \qquad
 A_x = \{\theta \in \Theta \; ; \; (x,\theta) \in A\},
\end{equation} 
 corresponding respectively to the slices of \(A\) at fixed \(\theta\) and fixed \(x\). Then denote 

\begin{equation}
	 \beta_A = \inf_{x\in\mathcal{X}} \pi(A_x\mid x), \qquad \alpha_A = \sup_{\theta\in\Theta} P(A^\theta \mid \theta),
\end{equation}
  The bayesian inference is said to be strongly inconsistent  if there exists a measurable  set  $A$ in  $ \Theta\times\mathcal{X}$, such that 
 $\delta_A=\beta_A - \alpha_A > 0 $.
  The \textit{extent of set-based inconsistency} is defined by 
 
\begin{equation}
	 \delta^*_{\operatorname{set}}=\sup_{A} (\beta_A- \alpha_A)
\end{equation}

\noindent To relate $\delta_{\operatorname{set}}^{*}$ and $\delta^{*}$, fix a measurable set   $A$ and  define $h(x,\theta)=\mathds 1_A(x,\theta)-1_{  A^c}(x,\theta)$. We have 
$\alpha_h=\sup_\theta\{p(A_\theta\mid \theta)-p( {A}^c_\theta\mid \theta)\}        = 2\;\alpha_A-1$. Similarly,    $\beta_h=2\;\beta_A-1$ and therefore $\beta_h- \alpha_h =2( \beta_A- \alpha_A)$. 
Since 
 $\|h\|_{\infty}\leq 1$, it follows that 
	\begin{equation}
		\delta_{\operatorname{set}}^{*}\leq \frac{1}{2}\delta^{*}.
	\end{equation}  Consequently,
 set-based strong inconsistency  implies   function-based strong inconsistency.

\subsubsection{Weak inconsistency}
\label{section.weakinconsistency}
We now introduce a weaker notion of inconsistency. As with strong inconsistency, this phenomenon cannot occur under a proper prior. The  bayesian inference induced by $\pi$ is said to be \textit{weakly inconsistent} if there exists a bounded measurable function $h$ such that
 \begin{equation}
	\label{inconsistencyweakfunction}
	\forall x,\theta \; : \; \int h(x,\theta)\;  \pi (\theta \mid x )\;d\theta > \int   h(x,\theta)  \;p(x\mid\theta) \;dx.
 \end{equation}
The corresponding set-based formulation is: there exists a measurable set $A\subseteq \Theta\times\mathcal{X}$ such that
\begin{equation}
	\label{eq.inconsistencyweakset}
	\pi(A_x\mid x)
	>
	P(A^{\theta}\mid \theta),
	\qquad
	\text{for all }( \theta,x)\in\Theta\times\mathcal{X},
\end{equation}
 Equivalently, Eq. \eqref{eq.inconsistencyweakset} can be expressed in terms of the odds ratio (OR), which is often more convenient to work with:
   \begin{equation}
 	\label{eq.inconsistencyweaksetOR}
 	\operatorname{OR}\left( \pi(A_x\mid x),P(A^\theta\mid \theta)\right)= \dfrac{\operatorname{odd}(\pi(A_x\mid x))}{\operatorname{odd(P(A^\theta\mid \theta))}}  >1, 	\qquad
 	\text{for all }( \theta,x)\in\Theta\times\mathcal{X},
 \end{equation}
with $\operatorname{odd}(p)=p/(1-p)$.

~

\noindent  Of course, strong inconsistency implies weak inconsistency. The following example shows that the converse does not hold. It also shows that a permissible prior can lead to weak inconsistency.

\begin{example}[example ~\ref{example.contreexemplenormale} continued]
\label{example.normal.weakinconsistency}
Let $X \mid \theta \sim \mathcal{N}(\theta, 1)$ and $ \pi(\theta)=1+\exp (\theta)$. From Appendix \ref{appendix.contreexemplenormale},  $p(x)=1+\exp(x+1/2)$ and  $\theta \mid x \sim w(x)\; \mathcal{N}(x, 1)+(1-w(x))\; \mathcal{N}(x+1,1)$ with $w(x)=\{1+e^{x+1 / 2}\}^{-1}$.  
Consider the set $A=\{(x,\theta)\;;\; x<\theta\}$. We have $p(A\mid \theta)= \frac{1}{2}$ whereas $\pi(A\mid x)=  \frac{1}{2}w(x)+(1-w(x)) \Phi(1)> \frac{1}{2}$ with $\Phi$ the cdf of $\mathcal N(0,1)$. Therefore, this is an example of weak inconsistency. The fact that strong inconsistency does not occur here  follows from Theorem~\ref{thm.permissible.inconsistent} since  it has been shown that the prior is permissible.
\end{example}
 
\subsection{Strong inconsistency and permissible priors }
\label{section.SI.vs.ELC.ensemble}
We have presented in Example \ref{example.contreexemplenormale},  a prior $\pi$ that leads to weak inconsistency but is  permissible. In this section, we show that a prior $\pi$ that leads to strong inconsistency is not permissible and therefore any approximating sequence of proper priors is not EKL or ETV convergent.  We first prove the result with the  Total Variation distance. Then, non EKL convergence follows as a direct consequence.

Let $\pi$ be an improper prior. We consider  a sequence $(\pi_n)$ of proper priors such that $\pi_n(\cdot\mid x)$ is KL convergent   toward  $\pi(\cdot\mid x)$. As seen above, this is the case when $(\pi_n)$ converges $mq$-vaguely to $\pi$ as for example the sequence of truncated priors $\pi_n=\mathds 1_{K_n}\cdot \pi$.

 \begin{thm}
 	\label{thm.permissible.inconsistent}
Let $\pi$ be an improper prior for which strong inconsistency occurs. Then $\pi$ is neither TV-permissible nor KL-permissible.

 \end{thm}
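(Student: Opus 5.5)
The plan is to show that, for \emph{any} proper prior $\pi_n$ (not only those converging $mq$-vaguely to $\pi$),
\begin{equation*}
\operatorname{ETV}(\pi_n,\pi)\;\geq\; \frac{\delta^*}{2}.
\end{equation*}
Once this uniform lower bound is available, ETV convergence is impossible. Since Proposition~\ref{prop.EKLimpliesETV} says EKL convergence implies ETV convergence, EKL convergence is impossible as well. Hence $\pi$ is neither TV-permissible nor KL-permissible. Explicitly, Pinsker's inequality and Jensen's inequality give $\operatorname{EKL}(\pi_n,\pi)\geq 2\,\operatorname{ETV}(\pi_n,\pi)^2\geq (\delta^*)^2/2$.

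To obtain the bound, fix $h$ with $\|h\|_\infty\le 1$ and $\delta_h=\beta_h-\alpha_h>0$. The key device is the proper joint distribution $\pi_n(\theta)p(x\mid\theta)=p_n(x)\pi_n(\theta\mid x)$, on which Fubini's theorem applies because $h$ is bounded and $\pi_n$ is a probability. Integrating $h$ against this joint law in the two orders gives
\begin{equation*}
\int\!\!\int h(\theta,x)\,\pi_n(\theta\mid x)\,d\theta\, p_n(x)\,dx=\int\!\!\int h(\theta,x)\,p(x\mid\theta)\,dx\,\pi_n(\theta)\,d\theta\;\le\;\alpha_h .
\end{equation*}
On the other hand, for every $x$ we have
\begin{equation*}
\int h\,\pi_n(\theta\mid x)\,d\theta\;\ge\;\int h\,\pi(\theta\mid x)\,d\theta-2\,d_{\mathrm{TV}}\big(\pi_n(\cdot\mid x),\pi(\cdot\mid x)\big)\;\ge\;\beta_h-2\,d_{\mathrm{TV}}\big(\pi_n(\cdot\mid x),\pi(\cdot\mid x)\big),
\end{equation*}
using the variational form \eqref{eq.def.DTV} of $d_{\mathrm{TV}}$ with the section $h(\cdot,x)$. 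Integrating this against $p_n$, which has total mass one, and combining with the previous display yields $\alpha_h\ge \beta_h-2\operatorname{ETV}(\pi_n,\pi)$, that is, $\operatorname{ETV}(\pi_n,\pi)\ge \delta_h/2$. Taking the supremum over $h$ gives the claimed bound $\delta^*/2$, which is strictly positive by strong inconsistency.

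The main point needing care is measurability and integrability, and these are what make the argument go through. The map $x\mapsto d_{\mathrm{TV}}(\pi_n(\cdot\mid x),\pi(\cdot\mid x))$ must be measurable; I would write it as $\tfrac12\int|\pi_n(\theta\mid x)-\pi(\theta\mid x)|\,d\theta$, which is measurable by Tonelli's theorem. The posteriors must also be defined $p_n$-a.s. This holds because $\pi$ yields proper posteriors for all $x$ and $\pi_n$ is proper.

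The argument never uses the limit in $n$ or the $mq$-vague structure. The obstruction is therefore uniform over all proper priors, which is stronger than the statement requires.
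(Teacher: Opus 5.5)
Your proposal is correct and is essentially the paper's proof. The theorem is deduced from Proposition~\ref{prop.borneTV}, whose proof uses your Fubini argument on the proper joint law $\pi_n(\theta)p(x\mid\theta)$ together with the variational form \eqref{eq.def.DTV} of $d_{\mathrm{TV}}$ to obtain $\operatorname{ETV}(\pi_n,\pi)\ge\delta^*/2$ for any proper $\pi_n$; the KL part then follows from Proposition~\ref{prop.EKLimpliesETV}, and your explicit use of Jensen's inequality to get $\operatorname{EKL}\ge 2\,\operatorname{ETV}^2$ from pointwise Pinsker spells out a step the paper leaves implicit.
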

\begin{proof}
	The result for TV-permissibility follows directly from Proposition~\ref{prop.borneTV} below. The corresponding result for KL-permissibility then follows from Proposition~\ref{prop.EKLimpliesETV}.

\end{proof}
 
Theorem \ref{thm.permissible.inconsistent} is useful for proving the absence of strong inconsistency, as it suffices to exhibit a sequence of proper priors along which EKL or ETV convergence holds. 
\begin{prop} 
 	\label{prop.borneTV}
Let $\pi$ be an improper prior with proper prosterior. Denote    $\delta^*>0$  the extent  of inconsistency defined in \eqref{eq.delta.definition}, then  

\begin{equation}\label{eq.TV.inconsistencedelta}
\operatorname{ETV}(\pi_n,\pi) \;\ge\; \frac{1}{2}\delta^*.
\end{equation}

 \end{prop}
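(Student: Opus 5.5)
Fix any $h$ with $\|h\|_\infty\le 1$ and $\delta_h=\beta_h-\alpha_h>0$, and any proper prior $\pi_n$ with marginal $p_n$. The plan is to prove $\operatorname{ETV}(\pi_n,\pi)\ge \tfrac12\delta_h$ and then take the supremum over $h$. The key tool is the identity that holds for a proper prior: the joint distribution $\pi_n(\theta)p(x\mid\theta)=p_n(x)\pi_n(\theta\mid x)$ lets us compute the same expectation in two ways. Define
$$
E_n=\iint h(\theta,x)\,p(x\mid\theta)\,\pi_n(\theta)\,dx\,d\theta=\int\Big(\int h(\theta,x)\,\pi_n(\theta\mid x)\,d\theta\Big)p_n(x)\,dx ,
$$
where Fubini applies because $h$ is bounded and the joint measure is a probability.

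First I bound $E_n$ from above using the sampling side. For every $\theta$ the inner integral $\int h(\theta,x)p(x\mid\theta)dx$ is at most $\alpha_h$, and $\pi_n$ is a probability, so $E_n\le\alpha_h$.

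Next I compare the posterior side with the formal posterior. For each $x$, the function $\theta\mapsto h(\theta,x)$ is bounded by $1$. By the functional form of total variation in \eqref{eq.def.DTV},
$$
\int h(\theta,x)\,\pi_n(\theta\mid x)\,d\theta \ \ge\ \int h(\theta,x)\,\pi(\theta\mid x)\,d\theta-2\,d_{\rm TV}\big(\pi_n(\cdot\mid x),\pi(\cdot\mid x)\big)\ \ge\ \beta_h-2\,d_{\rm TV}(\cdot).
$$
Integrating against the probability $p_n$ gives $E_n\ge\beta_h-2\,\operatorname{ETV}(\pi_n,\pi)$.

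Combining the two bounds yields $\operatorname{ETV}(\pi_n,\pi)\ge\tfrac12(\beta_h-\alpha_h)$. Taking the supremum over $h$ gives \eqref{eq.TV.inconsistencedelta}. The bound holds for every $n$ and every proper $\pi_n$, so no convergence assumption is needed.

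The main obstacle is measurability and integrability bookkeeping. One must check that $x\mapsto d_{\rm TV}(\pi_n(\cdot\mid x),\pi(\cdot\mid x))$ is measurable, and that the formal posterior is proper for $p_n$-almost every $x$. The statement assumes a proper posterior for all $x$, which covers the second point. The first follows by writing $d_{\rm TV}$ as $\tfrac12\int|\pi_n(\theta\mid x)-\pi(\theta\mid x)|d\theta$ with jointly measurable densities. A further detail is the factor convention: with $d_{\rm TV}=\tfrac12\sup_h\int h(p-q)$, the deviation is exactly $2d_{\rm TV}$, which produces the constant $\tfrac12$.
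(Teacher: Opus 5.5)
Your proof is correct and follows the paper's route. You use Fubini on the proper joint law $\pi_n(\theta)p(x\mid\theta)$ to bound the posterior-side expectation by $\alpha_h$, bound the formal-posterior expectation below by $\beta_h$, and convert the gap $\beta_h-\alpha_h$ into $2\,\operatorname{ETV}(\pi_n,\pi)$ through the functional form of $d_{\mathrm{TV}}$. The only cosmetic difference is that you apply the total-variation bound pointwise in $x$ before integrating against $p_n$, whereas the paper integrates the difference first; the two are equivalent.
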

   \begin{proof} 
Consider  a function $h$ such that $\|h\|_{\infty}\leq 1$. By    Fubini's theorem,

\begin{align*}
   	\iint_{\Theta\times\mathcal{X}}h(\theta,x)\, \pi_n(\theta\mid x)\, p_n(x)\,\mathrm{d}\theta\;\mathrm{d} x
& 	= 
 	\int_{\Theta} 
 	\left(\int_{\mathcal{X}} 
 	h(\theta,x)\, p(x\mid\theta)\,\mathrm{d} x\right)\,\pi_n(\theta)\,\mathrm{d}\theta  
\\
&\leq \sup_{\theta\in\Theta}  \int_{\mathcal X} \;h(\theta,x)\;p(x\mid\theta)\; dx \;=\; \alpha_h.
\end{align*}

\noindent  	On the other hand,
 $$  
 	\int_{\mathcal{X}}\left(\int_{\Theta}h(\theta,x)\, \pi(\theta\mid x)\, \,\mathrm{d}\theta\right) p_n(x)\;\mathrm{d} x
 	\;\ge\; 
 	\inf_{x\in \mathcal X} \int_{\Theta} h(\theta,x) \pi(\theta \mid x)\; d\theta \;=\; \beta_h.
 $$ 	Therefore,
 	\begin{equation}
 	\label{dtv.inconsistency}
 	\int_{\mathcal{X}}\left(\int_{\Theta} h(\theta,x) \pi(\theta \mid x)\; d\theta - \int_{\Theta} h(\theta,x) \pi_n(\theta \mid x)\; d\theta\right)\; p_n(x)\; dx \;\ge\; \beta_h-\alpha_h.  
 	\end{equation}
 	By definition of the total variation distance, we have the  lower bound:
 	$$
 	\int_{\mathcal{X}} p_n(x)\,  \operatorname{d}_{\mathrm{TV}} \left( \pi_n(\cdot\mid x),\pi(\cdot\mid x)  \right) \;\mathrm{d} x \;\ge\; \frac{1}{2}(\beta_h-\alpha_h).
 	$$
 	Since this inequality holds for any $h$ with $\| h\|_\infty\leq 1$, equation \eqref{eq.TV.inconsistencedelta} follows.
 \end{proof}

 \begin{cor} 
 	\label{corol.ECL.min}	
 Let $\pi$ be an improper prior such that   $\delta^*>0$. We have the lower bound

\begin{equation}
\label{eq.EKL.maj.delta}
 \operatorname{EKL}(\pi_n,\pi)   \;\ge\; \frac{1}{2}\;\delta^{*2}.
\end{equation}

\noindent When $\pi_n$ is obtained by truncating $\pi$ to $K_n$, where $(K_n)$ is an approximating sequence of compact sets, the following sharper lower bound holds
 
 	\begin{align}
 	\label{ELC.lower.bound.2}
  \operatorname{EKL}(\pi_n,\pi)  &\;\ge\; -\ln\!\Big(1 - \frac{1}{2}\delta^*\Big) \geq \frac{1}{2}\delta^*  .  	\end{align}
 \end{cor}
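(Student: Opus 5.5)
The plan is to derive both bounds from Proposition~\ref{prop.borneTV}, which gives $\operatorname{ETV}(\pi_n,\pi)\ge \frac12\delta^*$, and then pass from total variation to KL pointwise in $x$. Jensen's inequality then transfers the pointwise bound to the integral. Throughout, $p_n$ is a probability density on $\mathcal X$, because $\pi_n$ is proper. Jensen with respect to $p_n(x)\,dx$ is therefore legitimate.

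\emph{General bound.} For each $x$, Pinsker's inequality gives
\[
\operatorname{KL}\big(\pi_n(\cdot\mid x)\,\|\,\pi(\cdot\mid x)\big)\ge 2\,\operatorname{d}_{\mathrm{TV}}\big(\pi_n(\cdot\mid x),\pi(\cdot\mid x)\big)^2 .
\]
Integrate against $p_n$ and apply Jensen to the convex map $t\mapsto t^2$. This yields $\operatorname{EKL}(\pi_n,\pi)\ge 2\,\operatorname{ETV}(\pi_n,\pi)^2$. Since $t\mapsto t^2$ is increasing on $[0,\infty)$, Proposition~\ref{prop.borneTV} gives $\operatorname{EKL}(\pi_n,\pi)\ge 2(\delta^*/2)^2=\frac12\delta^{*2}$. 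The only care needed is that the ETV bound holds for every $h$, hence for the supremum $\delta^*$; Proposition~\ref{prop.borneTV} already provides this.

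\emph{Truncation case.} Here the posterior can be computed explicitly. If $\pi_n\propto \mathds 1_{K_n}\pi$, then
\[
\pi_n(\theta\mid x)=\frac{\mathds 1_{K_n}(\theta)\,\pi(\theta\mid x)}{\pi(K_n\mid x)} .
\]
The log-ratio $\log\big(\pi_n(\theta\mid x)/\pi(\theta\mid x)\big)$ is therefore the constant $-\ln \pi(K_n\mid x)$ on $K_n$, which gives
\[
\operatorname{KL}\big(\pi_n(\cdot\mid x)\,\|\,\pi(\cdot\mid x)\big)=-\ln \pi(K_n\mid x).
\]
For the total variation distance, the set $K_n^c$ realizes the supremum in \eqref{eq.def.DTV}, which gives
\[
\operatorname{d}_{\mathrm{TV}}=1-\pi(K_n\mid x).
\]
Hence, pointwise, $\operatorname{KL}=\phi(\operatorname{d}_{\mathrm{TV}})$ with $\phi(u)=-\ln(1-u)$. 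This function is convex and increasing on $[0,1)$. Jensen then gives $\operatorname{EKL}(\pi_n,\pi)\ge -\ln\big(1-\operatorname{ETV}(\pi_n,\pi)\big)$. By monotonicity of $\phi$ and Proposition~\ref{prop.borneTV}, this is at least $-\ln(1-\frac12\delta^*)$. Finally, the elementary inequality $-\ln(1-u)\ge u$ yields the last bound $\frac12\delta^*$.

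The main point to get right is the exact identification of KL and TV in the truncation case, namely that TV equals the posterior mass outside $K_n$. Once that is in place, everything reduces to the convexity of $\phi$. A minor technicality is the possibility $\pi(K_n\mid x)=0$ for some $x$. Such $x$ have $p_n(x)=0$ and so do not contribute to either integral.
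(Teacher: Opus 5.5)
Your proposal is correct and follows the paper's own argument. For the general case you combine Pinsker's inequality with the ETV bound of Proposition~\ref{prop.borneTV}; under truncation you use the pointwise identity $\operatorname{KL}=-\ln(1-\operatorname{d}_{\mathrm{TV}})$ with $\operatorname{d}_{\mathrm{TV}}=\pi(K_n^{\mathsf c}\mid x)$, followed by Jensen for $u\mapsto-\ln(1-u)$. You also make explicit two steps the paper leaves implicit: the Jensen step $\int 2\operatorname{d}_{\mathrm{TV}}^2\,p_n\ge 2\,\operatorname{ETV}(\pi_n,\pi)^2$, and that the final bound comes from the ETV bound~\eqref{eq.TV.inconsistencedelta} plus monotonicity of $u\mapsto-\ln(1-u)$, where the paper's text cites \eqref{eq.EKL.maj.delta} instead.
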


 \begin{proof}
 \noindent Applying Pinsker inequality and \eqref{eq.TV.inconsistencedelta}, $\operatorname{EKL}(\pi_n,\pi)  \geq 2(\operatorname{ETV}(\pi_n,\pi))^2  \;\ge\; \frac{1}{2}\;\delta^{*2} $ which prove \eqref{eq.EKL.maj.delta}.  Consider now the case   $\pi_n\propto \mathds 1_{K_n} \cdot \pi$,  we have $\pi_n(\theta \mid x)=\frac{1}{\pi\left(K_n \mid x\right)}\pi(\theta \mid x) \mathbf{1}_{K_n}(\theta)$ and then 
  \begin{align}\label{eq.TV.compact.log}
 \operatorname{d}_{\mathrm{TV}}\big(\pi_n(\cdot\mid x),\pi(\cdot\mid x)\big) &= \pi(K_n^{\mathsf{c}}\mid x) 
 \\ 
 \mathrm{KL}\big(\pi_n(\cdot\mid x),\pi(\cdot\mid x)\big) 
 &\label{eq.KL.compact.log}= -\ln\big(1-\pi(K_n^{\mathsf{c}}\mid x)\big).
 \end{align} Applying  Jensen's inequality with the convex function  $h(u):=-\ln(1-u)$,
 \begin{align*}
 	\int_{\mathcal{X}} p_n(x)\, \mathrm{KL}\left(\pi_n(\cdot \mid x)\Vert\pi(\cdot \mid x)\right)\,\mathrm{d} x
 	&\ge\; -\ln\!\Big(1 - \int_{\mathcal{X}} p_n(x)\,\pi(K_n^{\mathsf{c}}\mid x)\,\mathrm{d} x\Big) \\
 	&=\; -\ln\!\Big(1 - \int_{\mathcal{X}} p_n(x)\,	\operatorname{d}_{\mathrm{TV}}\big(\pi_n(\cdot\mid x),\pi(\cdot\mid x)\big)\mid x)\,\mathrm{d} x\Big)   . 
 \end{align*}
 The result follows from   \eqref{eq.EKL.maj.delta}.
 \end{proof}

\begin{remark}
	Using the set-based formulation of the total variation distance given in
	\eqref{eq.def.DTV}, Formulas   \eqref{eq.TV.inconsistencedelta},
	\eqref{eq.EKL.maj.delta}, and \eqref{ELC.lower.bound.2} remain valid with $\delta^{*}$ replaced
	by $2\times\delta_{\operatorname{set}}^{*}$.
	
\end{remark}

\begin{cor}
Let $\pi_n\propto \mathds 1_{K_n} \cdot \pi$ obtained by truncating $\pi$ to $K_n$, where $(K_n)$ is an approximating sequence of compact sets. Assume that  there exists $\varepsilon>0$ and an integer $N$, such that, for all $n\geq N$ and for all $x$ 
\begin{equation}
\label{eq.cond.unif.TV}
\operatorname{d}_{\mathrm{TV}}\bigl(\pi_n(\cdot\mid x),\pi(\cdot\mid x)\bigr) \leq 1-\varepsilon.
\end{equation}	
 then   EKL convergence is equivalent to  ETV convergence.
\end{cor}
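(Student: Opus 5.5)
The statement asks for two implications, and I would prove each one pointwise in $x$ before integrating against $p_n(x)$. Throughout I write
\[
u_n(x)=\pi(K_n^{\mathsf c}\mid x)
=\operatorname{d}_{\mathrm{TV}}\bigl(\pi_n(\cdot\mid x),\pi(\cdot\mid x)\bigr)
\]
for the truncated priors, using \eqref{eq.TV.compact.log}. By \eqref{eq.KL.compact.log}, the pointwise Kullback--Leibler divergence is $-\ln(1-u_n(x))$. Both $\operatorname{ETV}(\pi_n,\pi)$ and $\operatorname{EKL}(\pi_n,\pi)$ are therefore integrals, with respect to $p_n(x)\,dx$, of a function of the single quantity $u_n(x)\in[0,1]$. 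The equivalence should then follow from comparing $u$ with $-\ln(1-u)$ on the interval $[0,1-\varepsilon]$.

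\textbf{EKL $\Rightarrow$ ETV.} This direction is already given by Proposition~\ref{prop.EKLimpliesETV} through Pinsker's inequality. In the truncated setting it is even more direct, since $-\ln(1-u)\ge u$ for all $u\in[0,1)$. This yields $\operatorname{EKL}(\pi_n,\pi)\ge \operatorname{ETV}(\pi_n,\pi)$ for every $n$, with no use of \eqref{eq.cond.unif.TV}.

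\textbf{ETV $\Rightarrow$ EKL.} This is where \eqref{eq.cond.unif.TV} enters. The map $g(u)=-\ln(1-u)$ is convex on $[0,1-\varepsilon]$ and satisfies $g(0)=0$. Convexity puts its graph below the chord joining $(0,0)$ to $(1-\varepsilon,-\ln\varepsilon)$, so that
\[
g(u)\le C_\varepsilon\, u,\qquad C_\varepsilon=\frac{-\ln\varepsilon}{1-\varepsilon},\qquad 0\le u\le 1-\varepsilon .
\]
Since $u_n(x)\le 1-\varepsilon$ for all $x$ and all $n\ge N$, I substitute $u=u_n(x)$ and integrate against $p_n(x)\,dx$. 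This gives
\[
\operatorname{EKL}(\pi_n,\pi)\le C_\varepsilon\,\operatorname{ETV}(\pi_n,\pi),\qquad n\ge N .
\]
Hence ETV convergence forces EKL convergence. Combined with the reverse inequality, the two criteria are equivalent up to constants for $n\ge N$.

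The only point needing care is the uniformity of the bound $u_n(x)\le 1-\varepsilon$ in $x$. It is what allows a single constant $C_\varepsilon$ to be pulled outside the integral. Without it, the unbounded growth of $g(u)$ as $u\to 1$ could make EKL diverge while ETV stays small. Since \eqref{eq.cond.unif.TV} is assumed uniformly in $x$, the argument only needs measurability of $x\mapsto u_n(x)$. This holds because $u_n(x)$ is an integral of a jointly measurable function, so Tonelli's theorem applies to the integrals in $x$.
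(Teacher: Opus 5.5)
Your proposal is correct and follows the same route as the paper's proof: write $\operatorname{KL}=-\ln(1-\operatorname{d}_{\mathrm{TV}})$ for truncated priors, use $u\le -\ln(1-u)$ for one direction, use the convexity chord bound $-\ln(1-u)\le \frac{-\ln\varepsilon}{1-\varepsilon}\,u$ on $[0,1-\varepsilon]$ for the other, and integrate against $p_n$. Your comments on uniformity in $x$ and on measurability only spell out points the paper leaves implicit.
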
 
 \begin{proof}
 From \eqref{eq.KL.compact.log} and \eqref{eq.TV.compact.log}, $\operatorname{KL}=-\log(1-d_{\operatorname{TV}})$. Since $-\log(1-x)$ is convex, when \eqref{eq.cond.unif.TV} holds, 
 $$
 \operatorname{d}_{\mathrm{TV}}\big(\pi_n(\cdot\mid x),\pi(\cdot\mid x)\big)\leq  {\mathrm{KL}}\big(\pi_n(\cdot\mid x),\pi(\cdot\mid x)\big)
 \leq \frac{-\log(\varepsilon)}{1-\varepsilon}\operatorname{d}_{\mathrm{TV}}\big(\pi_n(\cdot\mid x),\pi(\cdot\mid x)\big) 
 $$
 \end{proof}
When  condition~\eqref{eq.cond.unif.TV} fails,   the two criteria lead to different conclusions, as illustrated in Section~\ref{dvt.non.ELC}.

 \begin{remark}
Proper priors rule out such inconsistencies. However, as shown in Example~\ref{example.jeffreys.nonEKL.nonETV}, this does not imply ETV or EKL convergence for approximating sequences of priors.
 \end{remark}

\section{A generic model with a variety of cases}

 \label{section.generic.mmodel}
 
\label{etude.generic.model}

We introduce a family of discrete models whose  framework is simple, yet flexible enough to cover a broad variety of consistency and inconsistency patterns. It encompasses the Flatland paradox of Section~\ref{section.flatlandmodel} and  the Fraser et al. example of Section~\ref{section.frasermonettemodel}.    We named it the \textit{generic model}.

Let $\theta$ be a nonnegative integer. Conditional on $\theta\geq 1$, the random variable $X$ takes the values $\theta-1$ and $\theta+1$ with probabilities $1-q_\theta$ and $q_\theta$, respectively, where $0\leq q_\theta\leq 1$. In the special case $\theta=0$, $X$ takes the values $0$ and $1$ with probabilities $1-q_0$ and $q_0$, respectively. Therefore, the  likelihood  function  of the generic model  is,  for $\theta\geq 0$, 
 
 \begin{equation}
 	\label{eq.genericmodel}
 	l(\theta;x)=\left\{
 	\begin{array}{ll}
 		1-q_{\theta}   & \textrm{if } x=\theta-1 \textrm{ or } x=\theta=0 \\[5pt]
 		q_{\theta} & \textrm{if } x=\theta+1 
 	\end{array}
 	\right.
 \end{equation}
~

In this section, we consider several prior improper distributions $\pi(\theta)$. The approximating sequence of priors will be  its truncated version  $\pi_n (\theta) =  \frac{1}{\pi(K_n)}\, \pi(\theta) \;\mathbf{1}_{K_n}(\theta)$ with $K_n=\{0,1,2,...,n\}$.  

~

The Flatland paradox  corresponds to $q_\theta=3/4$ for $\theta\geq 1$, $q_0=1$, and   $\pi(\theta)\propto 3^\theta$, $\theta$ being the path length. The Fraser et al. paradox correspond to $q_\theta=2/3$, $\theta\geq 1$,  and   $\pi(\theta)\propto 2^\theta$, $\theta$ being the length of the binary expansion.

\subsection{Calculations for the Generic Model}

 \subsubsection{Posterior and marginal distributions under a prior $\pi$}

Given a prior distribution $\pi(\theta)>0$,    the improper marginal  distribution is :

\begin{equation}
\label{eq.px.generic}
p(x) = \left\lbrace 
\begin{array}{ll}
(1-q_0) \pi(0) + (1-q_1) \pi(1) & \text{ if } x=0 \\
q_{x-1}\pi(x-1) + (1-q_{x+1})\pi (x+1) & \text{ if } x>0
\end{array}
\right.
\end{equation}
and the  posterior distribution is

\begin{equation}
\label{eq.post.generic}
\pi (\theta \mid x ) = 
\left\lbrace
\begin{array}{lll}
\frac{(1-q_0)\pi(0)}{p(0)}\; \mathds 1_{\theta=0} (\theta) & + \frac{(1-q_1)\pi(1)}{p(0)} \; \mathds 1_{\theta=1} & \text{ if } x=0 \\
\frac{(1-q_{x+1})\pi(x+1)}{p(x)} \; \mathds 1_{\theta=x+1}& + 
\frac{q_{x-1}\pi(x-1)}{p(x)}\;  \mathds 1_{\theta=x-1}  &
\text{ if } x>0,
\end{array}
\right.
\end{equation}

\subsubsection{Approximation of the improper prior by truncation}

For the truncated version $\pi_n$ of $\pi$ on $K_n=\{0,1,2,...,n\}$, the marginal  distribution  is:

\begin{equation}
	p_n (x) = \left\lbrace 
\begin{array}{ll}
	(1-q_0) \pi_n (0) + (1-q_1) \pi_n (1) & \text{ if } x=0 \\
	q_{x-1}\pi_n (x-1) + (1-q_{x+1})\pi_n (x+1) & \text{ if } 0<x<n \\
	q_{n-1} \pi_n (n-1) & \text{ if } x=n \\
	q_{n} \pi_n (n) & \text{ if } x=n+1 \\
	0 &  \text{ if } x>n+1
\end{array}
\right.
\end{equation}
and the  posterior distribution is

\begin{equation}
	\pi_n (\theta \mid x ) = 
\left\lbrace
\begin{array}{ll}
	\pi(\theta \mid x) &\text{ if } 0 \leq x <n \\
	\mathds 1_{\theta=x-1}  & \text{ if } x=n+1 \text{ or } x=n.   
\end{array}
\right.
\end{equation}

\subsubsection{KL and EKL divergences between the posteriors}
We  have

\begin{equation}
	\frac{\pi_n(\theta\mid x)}{\pi(\theta \mid x)} = 
\left\lbrace
\begin{array}{ll}
	1 & \text{ if } 0 \leq x < n \\
	\frac{p(x)}{q_{x-1} \pi(x-1)} & \text{ for } x= n, n+1 \text{ and } \theta = x-1 \\
	0 & \text{ otherwise}.
\end{array}
\right.
\end{equation}

and, by  a straightforward calculation:
\begin{equation}
\operatorname{KL} \left(\; \pi_{n}(\cdot \mid x) \, \| \, \pi(\cdot \mid x)  \; \right) =
\left\{ 
\begin{array}{lcl}
0&\textrm{for} &  0 \leq x < n \textrm{ or } x>n+1\\	 
\ln \left( \frac{q_{x-1}\pi(x-1) + (1-q_{x+1})\pi (x+1)}{q_{x-1} \pi(x-1)} \right)& \textrm{for}&  x=n\textrm{ or } n+1.
\end{array}
\right.
\end{equation}

and 

\begin{equation}
	\label{EKL.genericmodel}
	\operatorname{EKL}(\pi_n,\pi)= \frac{1}{\pi(K_n)} 
	\left(
	q_{n-1} \pi(n-1) \ln \left( \frac{p(n)}{q_{n-1} \pi(n-1)} \right) +
	q_{n} \pi(n) \ln \left( \frac{p(n+1)}{q_{n} \pi(n)} \right)
	\right)
\end{equation}

\subsubsection{Odd ratios and inconsistency}

We consider the 
$A = \{ (x,\theta ) \;; \;  \theta= x+1  \}$, which provides a natural candidate for exhibiting inconsistency when it occurs.  Denote by $A^c$ the complement   of $A$. We have  
\begin{equation}
	\label{prior.modelgeneral}
	\frac{P(X=\theta-1\mid\theta)}{P(X=\theta+1\mid\theta)}=\frac{P(A^\theta \mid \theta )}{P( (A^\theta)^c \mid \theta)} = \frac{1-q_{\theta}}{q_{\theta}}, \; \text{if}\;  \theta >0  \;  \text{and, for } \theta =0 : \; \frac{P(A^\theta\mid \theta = 0)}{P( (A^\theta)^c \mid \theta = 0)} = 0.
\end{equation} 
The posterior odd for $A$ is:
\begin{equation}
	\label{posterior.modelgeneral}
	\frac{\pi ( A_x \mid x)}{\pi( A_x^c \mid x)} = \frac{1-q_{x+1}}{q_{x-1}} \times\frac{\pi(x+1)}{\pi(x-1)} \; \text {for } x \neq 0 \text{ and } \frac{\pi ( A_x \mid X=0)}{\pi(A_x^c \mid X=0)} = \frac{1-q_{1}}{1-q_{0}} \times\frac{\pi(1)}{\pi(0)} \text{ for } x=0.
\end{equation}
and the following odd ratio, which will be useful to detect inconsistency, is:

\begin{equation}
	\label{eq.OR.prior.post}
	\operatorname{OR}\left( \pi(A_x\mid x),P(A^\theta\mid \theta)\right)= \dfrac{\operatorname{odd}(\pi(A_x\mid x))}{\operatorname{odd}(P(A^\theta\mid \theta))}  = \frac{q_{\theta}}{1-q_{\theta}}
	\times 
	\frac{1-q_{x+1}}{q_{x-1}}
	\times
	\frac{\pi(x+1)}{\pi(x-1)}
\end{equation}  
and, for $\theta=0$, $	\operatorname{OR}\left( \pi(A_x\mid x),P(A^\theta\mid \theta)\right)=+\infty$.

Strong inconsistency is then equivalent to saying that, for any $(\theta,x)$, one has $\operatorname{OR}\left( \pi(A_x\mid x),P(A^\theta\mid \theta)\right) > 1+\epsilon$ for some $\varepsilon>0$.
The use of odd-ratios has the advantage to being insensitive to the multiplication of $\pi$ by a constant.

\subsection{Various cases}

\subsubsection{Strong inconsistency case} 
\label{subsection.strongincons}
 We illustrate here Theorem \ref{thm.permissible.inconsistent} by considering    the case where $q_\theta=q$ is constant with $\frac{1}{2}<q<1$ and $\pi(\theta)=\lambda^\theta$ for some $\lambda$.  This choice of prior is not intuitive  but  corresponds,  omitting the specificity  at $\theta=0$,  to the priors used in  the Flatland example  ($q=\frac{3}{4}$,  $\lambda= 3$) and  in the Fraser-Monette-Ng  example ($q=\frac{2}{3}$, $\lambda=2$).   


Put $A = \{ (x,\theta ) \;; \;  \theta= x+1  \}$, from (\ref{eq.OR.prior.post}), we have

\begin{equation}
	\operatorname{OR}\left( \pi(A_x\mid x),P(A^\theta\mid \theta)\right) =
	\left\{\begin{array}{lcl}
		\lambda^2 &\mathrm{ for } &\theta>0 \textrm{ and } x>0\\
		+\infty &\mathrm{ for } &\theta=0 \\
		\lambda\; \dfrac{q}{1-q} &\textrm{ for } &\theta>0 \textrm{ and } x=0\\
	\end{array} \right.
\end{equation}  
\begin{description}
	\item[Case 1: $\lambda <1$.] Here,  the prior is proper. Therefore, by Prop. \ref{prop.ETV.EKL.proper},  the prior is permissible and   from Theorem \ref{thm.permissible.inconsistent}, strong inconsistency does not occur.  This can also be checked directly in Formula \eqref{EKL.genericmodel}.
	\item[Case 2: $\lambda >1$.]  Here, strong inconsistency occurs for $A$. From \eqref{eq.OR.prior.post}, and since $q>\frac{1}{2}$, \\	$\operatorname{OR}\left( \pi(A_x\mid x),P(A^\theta\mid \theta)\right) \geq\lambda >1 $. Therefore, from Theorem \ref{thm.permissible.inconsistent}, $\pi$  is not permissible. For example, from (\ref{EKL.genericmodel}), we get~:	
	$$
	\operatorname{EKL}(\pi_n,\pi)= \xrightarrow{n\rightarrow +\infty}q\;\dfrac{\lambda^2-1}{\lambda^2}   \; \log \left(1+\lambda^2\dfrac{1-q}{q}\right)  \neq 0.
	$$  
	\item[Case 3 : $\lambda=1$.] The prior is  $\pi(\theta)\propto 1$. In that case, the prior is improper but also permissible and strong inconsistency does not occur. Indeed, $p(x) \propto 1$ for $x\geq 1$, $p(0)=2(1-q)$, $\pi(\theta=x-1\mid x)=q$,   and   $\pi(\theta=0 \mid x=0)=1/2$.
	Then, from (\ref{EKL.genericmodel}), we have  the EKL convergence:
	
	$$
	\operatorname{EKL}(\pi_n,\pi) =\dfrac{-2 q\log(q)}{n+1}\xrightarrow{n\rightarrow +\infty}  0
	$$

 \end{description}

\subsubsection{Permissible prior with weak inconsistency}
\label{weak.no.strong}
We illustrate here that permissibility of a prior does not rule out weak inconsistency.\\
Put $q_\theta=q>1/2$ and $\pi(\theta) = (1 + \frac{1}{\theta})^\theta$ with $\pi(0)=1$.  
From (\ref{EKL.genericmodel}),

$$
	\operatorname{EKL}(\pi_n,\pi)  \sim -2 q\log(q)/(n+1) \xrightarrow{n\rightarrow +\infty}  0
$$ 
and therefore, $\pi$ is KL permissible. Since $\mathbb P(A^\theta \mid \theta)=1$ for $\theta>0$, we have 

\begin{equation}
	\operatorname{OR}\left( \pi(A_x\mid x),P(A^\theta\mid \theta)\right)   =
	\left\{\begin{array}{lcl}
		{\left(1+\frac{1}{x+1}\right)^{x+1}} \times	 {\left(1+\frac{1}{x-1}\right)^{-(x-1)}} 	 &\mathrm{ for } &\theta>0 \textrm{ and } x>0\\
		+\infty	  &\mathrm{ for } &\theta=0 
		\\
		2\frac{q}{1-q}	  &\textrm{ for } &\theta>0 \textrm{ and } x=0\\
	\end{array} \right.
\end{equation}  
which is always greater than $1$.  Therefore, there is  weak inconsistency.

Another example is obtained by taking $\pi(\theta) = \lambda ^{\sqrt{\theta}}$ with   $\lambda > 1$. Then, it can be shown that EKL convergence holds, but, for any $(\theta,x)$, 
$$
1-q = 
P(A_{\theta} \mid \theta) < 
\pi(A_x \mid x ) = \frac{1}{1+\frac{q}{1-q} \lambda^{\sqrt{x-1} - \sqrt{x+1}}}.
$$ 
In both examples,   no strong inconsistency occurs since the prior is permissible.


\subsubsection{ETV convergence without EKL convergence}
\label{dvt.non.ELC}
By Proposition~\ref{prop.EKLimpliesETV}, EKL convergence implies ETV convergence. We   provide here  a counterexample showing that the converse does not hold. The details are given  in Appendix \ref{appendix.4.2.3}.\\
Consider $q_{\theta} = \text{min}(1,1/\theta)$ with $\displaystyle \pi(\theta) = \frac{e^{\theta^2}}{\sqrt{\theta !}}$. Then,
$$
	\operatorname{ETV}(\pi_n,\pi)=\int_{\mathcal X} p_n(x) d_{TV} ( \,\pi_n(\cdot \mid x)\; , \; \pi(\cdot \mid x)\,) \; dx \sim \frac{1}{n}.
$$
This implies that there can't be any strong inconsistency in this model. However,
$$
	\operatorname{EKL}(\pi_n,\pi)=
\int_{\mathcal X} p_n(x) \text{KL} ( \, \pi_n(\cdot \mid x)\; \Vert \; \pi(\cdot \mid x) \, )  \; dx \longrightarrow 4 ,
$$
and we don't have EKL convergence.

\subsubsection{EKL convergence  may depend on the sequence of compact set}
 \label{section.generic.exemple.ELC.compact}
We consider here an  example  where EKL convergence  depends on the choice of the compact set sequence. As in Example~\ref{example.contreexemplenormale} or in \cite {drusai2021}, Section 5, EKL convergence depends on how the mass concentrate outside compact sets

Put $q_\theta=q$  with $\frac{1}{2}<q<1$. For $\lambda>1$,  consider the  prior:\\
\phantom{Let : }
$$
\pi(\theta )=\left\{\begin{array}{cl}
	\lambda^\theta & \text { if } \theta  \text {  is even } \\
	\frac{1-q}{q} & \text { if } \theta  \text { is odd }
\end{array}\right.
$$
We consider also $\pi_n$ as the truncated of $\pi$ by the approximating compact sequences of the form
$$
K_{n_1, n_2}=\left\{0 , 2 ,4, \ldots ,2 n_1\right\} \cup\left\{1 , 3 , 5 , 7 , 9, \ldots ,2 n_2+1\right\}
$$
, which is here related to  relative rates at which $n_1$ and $n_2$ go to $+\infty$.

\begin{description}
	\item[Case 1:]  $n=n_1=n_2$. In that case, $\pi_n$ does not EKL converge  to $\pi$: 
		\begin{equation}
	\label{eq.EKL.n1=n2}
	\operatorname{EKL}(\pi_{ n},\pi) \xrightarrow[n_1\to +\infty]{} \frac{q\left(\lambda^2-1\right)}{\lambda^2} \log \left(1+\lambda^2\left(\frac{1-q}{q}\right)\right)\neq 0
	\end{equation}
We may note that (\ref{eq.EKL.n1=n2})    holds for any   $n_2$ such that   $n_2=o\left(\lambda_1^{2 n_1}\right)$.

\item[Case 2:]   For example, $n=n_1$ and $n_2= \lambda^{3n}$. In this scenario, the compact expands at a much faster rate on odd numbers compared to even numbers and we get EKL convergence:
\begin{equation}
\label{eq.EKL.n1.logn2}	\operatorname{EKL}(\pi_{ n},\pi) \sim \frac{\lambda^{2n}q^2}{\lambda^{3n}(1-q)}\log \left(1+\lambda^2\left(\frac{1-q}{q}\right)\right) \xrightarrow[n\to +\infty]{} 0  	
\end{equation}

\end{description}

 \subsubsection{Non-permissible prior with no inconsistency}
We provide an example of a prior that is not KL-permissible and yet exhibits neither  nor strong nor weak  inconsistency. Here, we consider any sequence of proper priors converging $mq$-vaguely to the improper prior, not just the compact truncations used in the previous examples.
 
 Put 
 $
 q_0=q_2=1$, $ q_k=\frac{1}{2}$  for $k\notin\{0,2\})$  and $ \pi(k)=2^k.
$
 All posteriors are proper. The idea is to  isolate the  components
 $\theta=0$ and $ X=1$:
 \[
 P(X=1\mid\theta=0)=\pi(\theta=0\mid X=1)=1,
 \]
 and therefore posterior and sampling expectations coincide at $(\theta,x)=(0,1)$
 for every bounded measurable function. Therefore Eq. \eqref{eq.inconsistencyweakset} does not hold and neither weak nor strong
 inconsistency occurs.
 
We now  prove that no $mq$-approximating sequence of compact sets yields EKL convergence.
 Let $\pi_n$ be a sequence of proper priors converging $mq$-vaguly to $\pi$, and let $p_n$ be its marginal distribution.   For $x\ge4$,
 
\begin{equation}
	 \pi(\theta=x+1\mid x)=\frac45.
\end{equation}

  Set

 \begin{equation}
 	\alpha_n=\sum_{x\ge4}p_n(x),
 \qquad
 \beta_n=\sum_{x\ge4}p_n(x)\pi_n(\theta=x+1\mid x).
 \end{equation}

    Since  $\pi_n$   converges $mq$-vaguely, and therefore $q$-vaguely, to  $\pi$ improper, then, from Prop. 2.11 in \cite{bidr2016},  $\pi_n(K)\to 0$ for any compact set $K\subset \Theta $.  Therefore,  we get   
 $
 \alpha_n \to 1$,
and $ 
 \beta_n=\frac12\pi_n(\{5,6,\ldots\})\to\frac12.
 $. 
 
 Put $r_n(x)=\pi_n(\theta=x+1 \mid x)$.
For \(x\ge4\), both posterior distributions $\pi_n(\cdot\mid x)$ and $\pi(\cdot\mid x)$ are supported on \(\{x-1,x+1\}\). Their Kullback–Leibler divergence is therefore  
 
\begin{equation}
	\operatorname{KL}\left(\pi_n(\cdot \mid x) \| \pi(\cdot \mid x)\right)=d\left(r_n(x) \| 4/5\right) . 
\end{equation}
where $d(p_1\|p_2)$ denotes the KL divergence between two   Bernoulli distributions with parameters $p_1$ and $p_2$.
 Since $\alpha_n\to1$, we have $\alpha_n>0$ for all sufficiently large $n$. For such $n$, Jensen's inequality applied to the convex function $u \mapsto d(u \| 4 / 5)$ gives
$$\begin{aligned}
	\operatorname{EKL}\left(\pi_n, \pi\right) & \geq \sum_{x \geq 4} p_n(x) d\left(r_n(x) \| 4/5\right) \\
	& \geq \alpha_n d\left(\frac{\sum_{x \geq 4} p_n(x) r_n(x)}{\alpha_n} \Big\|4/5\right) \\
	& =\alpha_n d\left(\frac{b_n}{\alpha_n} \Big\| 4/5\right) .
\end{aligned}
$$
 Consequently,
 \[
 \liminf_{n\to\infty}\operatorname{EKL}(\pi_n,\pi)
 \ge
 d\!\left(\frac12\,\middle\|\,\frac45\right)
 =\log\frac54>0.
 \]
 Hence the absence of weak inconsistency does not imply
 KL-permissibility. 
  
\section{Appendix}

\subsection{ Proof of Proposition \ref{prop.mqvaguelogcv}}
\label{appendix.preuvemqvaguelogcv}

$$
\begin{aligned}
\operatorname{KL}\left(\pi_{n}(. \mid x) \| \pi(. \mid x)\right) &=\int_{\Theta} \pi_{n}(\theta \mid x) \log \left(\frac{\pi_{n}(\theta \mid x)}{\pi(\theta \mid x)}\right) \mathrm{d} \theta \\
 &=\int_{\Theta} \pi_{n}(\theta \mid x) \log \left(\frac{a_{n} \pi_{n}(\theta)}{\pi(\theta)} \times \frac{\int_{\Theta} p(x \mid \theta) \pi(\theta) \mathrm{d} \theta}{\int_{\Theta} p(x \mid \theta) a_{n} \pi_{n}(\theta) \mathrm{d} \theta}\right) \mathrm{d} \theta \\
& \leqslant \int_{\Theta} \pi_{n}(\theta \mid x) \log \left(\frac{\int_{\Theta} p(x \mid \theta) \pi(\theta) \mathrm{d} \theta}{\int_{\Theta} p(x \mid \theta) a_{n} \pi_{n}(\theta) \mathrm{d} \theta}\right) \mathrm{d} \theta
\end{aligned}
$$
The inequality comes from  $0 \leqslant \frac{a_{n} \pi_{n}(\theta)}{\pi(\theta)} \leqslant 1$. From the monotone convergence theorem, $\lim _{n \rightarrow \infty} \int_{\Theta} p(x \mid \theta) a_{n} \pi_{n}(\theta) \mathrm{d} \theta=\int_{\Theta} p(x \mid \theta) \pi(\theta) \mathrm{d} \theta$. So, there exists $N$ such that for $n>N, \frac{\int_{\theta} p(x \mid \theta) \pi(\theta) \mathrm{d} \theta}{\int_{\Theta} p(x \mid \theta) a_{n} \pi_{n}(\theta) \mathrm{d} \theta} \leqslant 1+\varepsilon$. Then, for all $\varepsilon>0$, there exists $N$ such that for all $n>N$,
$$
\begin{aligned}
0 \leqslant \operatorname{KL}\left(\pi_{n}(. \mid x) \| \pi(. \mid x)\right) & \leqslant \int_{\Theta} \pi_{n}(\theta \mid x) \log \left(\frac{\int_{\Theta} p(x \mid \theta) \pi(\theta) \mathrm{d} \theta}{\int_{\Theta} p(x \mid \theta) a_{n} \pi_{n}(\theta) \mathrm{d} \theta}\right) \mathrm{d} \theta \\
& \leqslant \int_{\Theta} \pi_{n}(\theta \mid x) \log (1+\varepsilon) \mathrm{d} \theta \\
& \leqslant \log (1+\varepsilon)
\end{aligned}.
$$

\subsection{Details of Example  \ref{example.contreexemplenormale}}
\label{appendix.contreexemplenormale}
 Consider the model
$
X\mid \theta \sim \mathcal N(\theta,1),
$ and $
\pi(\theta)=1+e^\theta .
$ We show here that EKL depends here on the choice if the approximating sequence of compact sets and as a consequence that the prior is permissible. 
We have 
$$
p(x)=\int_{\mathbb R}\pi(\theta)\; p\;(x\mid \theta)\,d\theta
=1+\int_{\mathbb R} e^\theta  p(x\mid \theta)\,d\theta
=1+e^{x+1/2}<\infty .
$$
The formal proper posterior is 
\begin{equation}
	\label{eq.post.example4}
	 \theta \mid x \sim w(x)\; \mathcal{N}(x, 1)+(1-w(x))\; \mathcal{N}(x+1,1) 
\end{equation}
with $w(x)=\{1+e^{x+1 / 2}\}^{-1}$.
For a compact set \(K\), let $\pi_K(\theta)=\mathbf{1}_K(\theta)  \pi(\theta)/\pi(K)$. Then ,

$$
p_K(x)=\frac{1}{\pi(K)}\int_K (1+e^\theta)\varphi(x-\theta)\,d\theta .
$$
where $\varphi(y)$ is the pdf function of  $\mathcal N(0,1)$.
\subsubsection{EKL convergence depends on the compact set sequence} By a direct calculation, 
$$
\operatorname{KL}(\pi_K(\theta\mid x) \| \pi(\theta\mid x))
=
\log\left(\frac{p(x)}{p_K(x)\pi(K)}\right) .
$$
Thus  EKL discrepancy is 
$$
\operatorname{EKL}(\pi_K(\theta\mid x) \,,\,\pi(\theta\mid x))
=
\int  p_K(x) 
\log\left(\frac{p(x)}{p_K(x)\pi(K)}\right) \,dx.
$$

\subsubsection*{Case 1 : $
	K_n=[-n,n]
	$}

In that case, 
$$
\pi(K_n)
=
\int_{-n}^n(1+e^\theta)\,d\theta
=
2n+e^n-e^{-n}
\sim e^n .
$$
Hence the normalized truncated prior is asymptotically dominated by the term
\(e^\theta\), and its mass concentrates near the upper boundary \(n\). In
particular, the interval \([n-1/2,n]\) has asymptotically non-negligible prior
mass. For \(\theta\in[n-1/2,n]\) and \(X-\theta\in[0,1/2]\), one has
\(X\in[n-1/2,n+1/2]\). On such values of \(x\), 
\(p_{K_n}(x)\) misses a non-negligible part of the posterior mass lying beyond
the boundary \(n\). More precisely,
$$
\frac{p_{K_n}(x)\pi(K_n)}{p(x)}
\leq \Phi(-1/2)+o(1),
$$
where $\Phi$ is the cdf of a standard normal distribution. Therefore
$$
\log\frac{p(x)}{p_{K_n}(x)\pi(K_n)}
\geq -\log \Phi(-1/2)+o(1).
$$
Since the event considered has a probability bounded away from zero, it follows
that
$$
\liminf_{n\to\infty} \operatorname{EKL}(\pi_{C_n}(\theta\mid x) \,,\,\pi(\theta\mid x))>0.
$$
Thus EKL convergence fails for the sequence
$
C_n=[-n,n].
$.

\subsubsection*{Case 2 : $K_n=[-e^{2n},n]$}

Here
$$
\pi(K_n)
=
\int_{-e^{2n}}^n(1+e^\theta)\,d\theta
=
e^{2n}+n+e^n-e^{-e^{2n}}
\sim e^{2n}.
$$
The normalization is now dominated by the constant part \(1\) of the prior over
the very long negative interval. The mass of the exponential part of the prior is negligible,
since
$$
\frac{\int_{-e^{2n}}^n e^\theta\,d\theta}{\pi(K_n)}
\sim
\frac{e^n}{e^{2n}}
=
e^{-n}
\xrightarrow[n\to +\infty]{} 0.
$$
Consequently, the region near the upper boundary \(n\), which was responsible
for the failure of convergence for the first case, has asymptotically negligible mass.
Most of the mass lies far in the negative part of the interval, where
\(p(x)=1+e^{x+1/2}\approx 1\), and where the truncation has essentially no
effect. Hence
$$
\frac{p_{K_n}(x)\pi(K_n)}{p(x)}\xrightarrow[n\to +\infty]{}  1
$$
and therefore we will show that

\begin{equation}
\label{eq.EKL.ex.normal.K1}
\operatorname{EKL}(\pi_{K_n}(\theta\mid x) \,,\,\pi(\theta\mid x)) \xrightarrow[n\to +\infty]{}   0.
\end{equation}

\noindent Denote by $Z$ a random variable of law ${\cal N} (0,1)$. We will use the following alternative expression for EKL :
$$
\text{EKL} \; = \; 
\int_{\mathbb R} \frac{p(x)}{\pi(K_n)}  \; 
\big(  -\pi(K_n \mid x ) \ln\left(  \pi(K_n \mid x)  \right) \big) \, dx,
$$
where, here,  $p(x) = 1 + e^{x+\frac{1}{2}}$ and 
$$
\pi ( K _n \mid x) \; = \; \frac{ p_{K_n} (x)  \pi (K_n) }  {p(x)} 
\; = \; 
\frac{1}{p(x) }
\mathbb P (Z + x \in K_n)
\;  + \; 
\frac{e^{x+\frac{1}{2}} }{p(x)}
\mathbb P (Z +x+1  \in K_n ) .
$$
Denote by $\Psi(u)$ the function $\Psi \; : \; u \mapsto -u \ln(u)$, defined on $[0,1[$. We will use the inequality 
$\Psi(u) \leq \min( \sqrt{u} \, , \, \frac{1}{e} \, , \, 1-u)$ with $u = \pi(K_n \mid x)$ and according to  each possible position of $x$.

\begin{enumerate}
	
	\item[(i)] For $x\leq -e^{2n}-2$,  we have $\pi(K_n \mid x) \leq \mathbb P ( Z \geq -e^{2n} -x -1 )$ and $ p(x) \leq 2$. Thus,
	$$
	\int _{-\infty}^{-e^{2n} - 2} p(x) \Psi( \pi(K_n\mid x)  )\, dx \leq 2 \int_{1}^{+\infty} \sqrt{P(Z \geq u)} \, du 
	$$
	and, as $n \rightarrow +\infty$, 
	$$
	\int_{-\infty}^{ -e^{2n} - 2  } \frac{p(x)}{\pi(K_n)} \Psi( \pi(K_n\mid x)  )\, dx \longrightarrow 0
	$$
	
	\item[(ii)] For $-e^{2n- 2 } \leq x \leq -e^{2n} + \sqrt n +1$ we use $\Psi(\pi(K_n \mid x )) \leq \frac{1}{e}$ and $p(x) \leq 2$, yielding 
	$$
	\int _{-e^{2n} - 2}^{-e^{2n} + \sqrt n +1} p(x) \Psi( \pi(K_n\mid x)  )\, dx \leq (\sqrt n +3)\frac{2}{e}
	$$
	and, as $n \rightarrow +\infty$, 
	$$
	\int_{-e^{2n} - 2}^{-e^{2n} + \sqrt n +1} \frac{p(x)}{\pi(K_n)} \Psi( \pi(K_n\mid x)  )\, dx \longrightarrow 0.
	$$
	
	\item[(iii)] For $-e^{2n} + \sqrt n +1 \leq x \leq n-\sqrt n - 1$ we use $p(x) \leq e^{\frac{1}{2}}(1+e^x)$, with $\Psi(u) \leq 1-u$ and $\pi( K_n \mid x) \geq \mathbb P (|Z| \geq \sqrt n) \geq 1- \frac{2 e^{-n/2}}{\sqrt{2\pi n }}$. Hence
	$$
	\int _{-e^{2n} + \sqrt n +1}^{n-\sqrt n - 1} p(x) \Psi( \pi(K_n\mid x)  )\, dx \; \leq \; 
	C \frac{e^{-n/2}}{\sqrt n} \int_{K_n} p(x) dx 
	\; \leq \; 
	C \frac{e^{-n/2}}{\sqrt n} e^{\frac{1}{2}} \pi(K_n)
	$$
	for some constant $C$. Once again, as $n \rightarrow +\infty$, 
	$$
	\int_{-e^{2n} + \sqrt n +1}^{n-\sqrt n - 1} \frac{p(x)}{\pi(K_n)} \Psi( \pi(K_n\mid x)  )\, dx \longrightarrow 0.
	$$
	
	\item[(iv)] For $n -\sqrt n -1 \leq x \leq n+2$ we use $ \Psi (u ) \leq \frac{1}{e}$ and $p(x) \leq e^{n+3}$, which gives~:
	$$
	\int _{n-\sqrt n - 1} ^{n+2}  p(x) \Psi( \pi(K_n\mid x)  )\, dx 
	\; \leq \; (\sqrt n + 3)  e^{n+3}.
	$$
	Therefore, as $n \rightarrow +\infty$, 
	$$
	\int_{n-\sqrt n - 1}^{n+2}  \frac{p(x)}{\pi(K_n)} \Psi( \pi(K_n\mid x)  )\, dx \longrightarrow 0.
	$$
	
	\item[(v)] Lastly, for $x\geq n+2$, we get $\Psi(K_n \mid x) \leq \mathbb P (Z \geq x-n-1)$ and $p(x) \leq e^{n+\frac{1}{2}} e^{x-n}$. Therefore a change of variables $t=x-n$ leads to~:
	$$
	\int _{n+2}^{+\infty}  p(x) \Psi( \pi(K_n\mid x)  )\, dx 
	\; \leq 
	e^{n+\frac{1}{2}}
	\int_1^{+\infty} e^t \mathbb P (Z \geq t)\, dt. \leq C e^n
	$$
	Hence, as $n \rightarrow +\infty$, 
	$$
	\int_{n-\sqrt n - 1}^{n+2}  \frac{p(x)}{\pi(K_n)} \Psi( \pi(K_n\mid x)  )\, dx   \; \leq \; 
	\frac{C e^n}{\pi(K_n)} \; \sim \; C' e^{-n} 
	\longrightarrow 0.
	$$
\end{enumerate}

Since each of the five  terms goes to $0$, EKL convergence holds for $K_n = [-e^{2n} ; n] $.

\subsubsection{ETV convergence depends on the compact set sequence}
 
When $K_n=[-e^{2n},n]$,   \eqref{eq.EKL.ex.normal.K1} and  Prop. \ref{prop.EKLimpliesETV} implies that $\operatorname{ETV}(\pi_{K_n}(\theta\mid x) \,,\,\pi(\theta\mid x)) $ converge to $0$. So, it remains to show that there is not ETV convergence for  $	K_n=[-n,n]$:
$$
\pi_n(\theta)
=\frac{(1+e^\theta)\mathbf{1}_{[-n,n]}(\theta)}{c_n},
$$
with $c_n=2n+e^n-e^{-n}\sim e^n$.  Define 

$$
r(x)=d_{\mathrm{TV}}\!\left(
\pi_n(\cdot\mid x),\pi(\cdot\mid x)
\right)
=\pi([-n,n]^c\mid x).
$$
Using \eqref{eq.post.example4}, for every fixed $y\in\mathbb R$, with $w(x)=\{1+e^{x+1 / 2}\}^{-1}$
$$
\begin{aligned}
	r_n(n+y)
	={}&w(n+y)\bigl\{\Phi(-2n-y)+\Phi(y)\bigr\}\\
	&+(1-w(n+y))
	\bigl\{\Phi(-2n-y-1)+\Phi(y+1)\bigr\}.
\end{aligned}
$$
Because $w(n+y)\to0$, it follows that
$$
r_n(n+y)\longrightarrow\Phi(y+1).
$$
Put $X=\theta+z$ where $z\sim\mathcal N(0,1)$, we have 
$$
\operatorname{ETV}(\pi_n,\pi)
=
\frac{1}{c_n}
\int_{-n}^{n}(1+e^\theta)
\int_{\mathbb R}\varphi(z)r_n(\theta+z)\,dz\,d\theta.
$$
The contribution of the constant term in $1+e^\theta$ is bounded
above by $2n/c_n$, which tends to zero. In the exponential
contribution, the substitution $u=n-\theta$ yields
$$
\operatorname{ETV}(\pi_n,\pi)
=
\frac{e^n}{c_n}
\int_0^\infty\int_{\mathbb R}
\mathbf{1}_{\{u\le2n\}}e^{-u}\varphi(z)
r_n(n-u+z)\,dz\,du
+o(1).
$$
For each fixed $u\ge0$ and $z\in\mathbb R$,
$$
r_n(n-u+z)\longrightarrow\Phi(1-u+z).
$$
Moreover, the integrand is bounded by the integrable function
$e^{-u}\varphi(z)$. Since $e^n/c_n\to1$, dominated convergence
therefore gives
$$
\begin{aligned}
	\lim_{n\to\infty}\operatorname{ETV}(\pi_n,\pi)
	&=\int_0^\infty e^{-u}
	\int_{\mathbb R}\varphi(z)\Phi(1-u+z)\,dz\,du\\
	&=\int_0^\infty e^{-u}
	\Phi\!\left(\frac{1-u}{\sqrt2}\right)\,du.
\end{aligned}
$$
The second equality follows from the fact that the difference
of two independent standard normal variables has distribution
$\mathcal N(0,2)$.
However,  integration by parts gives 
$$
\begin{aligned}
	\int_0^\infty e^{-u}
	\Phi\!\left(\frac{1-u}{\sqrt2}\right)\,du
	&=\Phi\!\left(\frac1{\sqrt2}\right)
	-\frac1{\sqrt2}\int_0^\infty
	\varphi\!\left(\frac{1+u}{\sqrt2}\right)\,du\\
	&=2\Phi\!\left(\frac1{\sqrt2}\right)-1.
\end{aligned}
$$
since $
e^{-u}\varphi\!\left(\frac{1-u}{\sqrt2}\right)
=\varphi\!\left(\frac{1+u}{\sqrt2}\right)
$. And then  $
 	\operatorname{ETV}(\pi_n,\pi)
	\longrightarrow
	2\Phi\!\left(\frac1{\sqrt2}\right)-1
	\simeq0.5205>0.
$.
 
\subsection{Proof of Eq (\ref{eq.EKLn.propre})}
	
\label{appendix.equation.KL}

		\begin{align}
			\nonumber	\operatorname{EKL}_n& = \int_{\mathcal{X}} \operatorname{KL}\left(\pi_n (\cdot \mid x) \| \pi(\cdot \mid x)\right) p_n(x) \mathrm{d} x
			\\
			\label{eq. EKL.proba.1.1} 	&=\int_\mathcal X \left\{\int_{ \Theta} \log \left(\dfrac{\alpha \pi_n(\theta)}{\pi(\theta)}\times \dfrac{p(x)}{\alpha p_n(x)}\right) \dfrac{p(x\mid\theta)\pi_n(\theta)}{p_n(x)}   d\theta\right\} p_n(x)   dx
			\\
			&\label{eq. EKL.proba.1.2}=\int_\mathcal X \left\{\int_{ \Theta} 
			\log \left(\dfrac{\alpha \pi_n(\theta)}{\pi(\theta)}\right) p(x\mid\theta)\pi_n(\theta)   d\theta
			+
			\int_{ \Theta} \log \left( \dfrac{p(x)}{\alpha p_n(x)}\right) p(x\mid\theta)\pi_n(\theta)   d\theta\right\}     dx
			\\
			\nonumber	&=\int_\mathcal X \left\{\int_{ \Theta} 
			\log \left(\dfrac{\alpha \pi_n(\theta)}{\pi(\theta)}\right) p(x\mid\theta)\pi_n(\theta)   d\theta
			+
			\log \left( \dfrac{p(x)}{p_n(x)} \right)p_n(x) -\log(\alpha )p_n(x)\right\}     dx\quad 
			\\
			\nonumber	&\label{eq.EKLn.propre.proof}=\int_\mathcal X \left\{\int_{ \Theta} 
			\log \left(\dfrac{\alpha \pi_n(\theta)}{\pi(\theta)}\right) p(x\mid\theta)\pi_n(\theta)   d\theta\right\}  dx
			-\operatorname{KL}\left(p_n(  x) \| p(  x)\right)-\log(\alpha ) 
			\\
			&= \int_{ \Theta} \left\{\int_\mathcal X
			\log \left(\dfrac{\alpha \pi_n(\theta)}{\pi(\theta)}\right) p(x\mid\theta)\pi_n(\theta)   dx\right\} d\theta  - \log(\alpha )
			-\operatorname{KL}\left(p_n(  x) \| p(  x)\right) 
			\\
			\nonumber	&= \int_{ \Theta}  
			\log \left(\dfrac{\pi_n(\theta)}{\pi(\theta)}\right)  \pi_n(\theta)     d\theta  
			-\operatorname{KL}\left(p_n(x) \| p(x)\right)\quad  
			\\
			\nonumber	&=  \operatorname{KL}\left(\pi_n   \| \pi \right)
			-\operatorname{KL}\left(p_n(  x) \| p(  x)\right)  
		\end{align}
		
		Let's justify that we can split the integral in $\theta$  into two parts to get    (\ref{eq. EKL.proba.1.2}) from (\ref{eq. EKL.proba.1.1}). We have to prove that at least one of the two integrals is bounded.  Since   $\alpha \pi_n(\theta)\leq \pi(\theta)$ and    $\frac{-1}{e}\leq x\log(x)\leq 0$ for $x$ in $[0,1] $, we have

		\begin{align*}
			-\frac{p(x)}{\alpha \,e} 
			\leq 
			\int_{ \Theta} 
			\log \left(\dfrac{\alpha \pi_n(\theta)}{\pi(\theta)}\right) {\pi_n(\theta)}p(x\mid\theta)  d\theta
			=\frac{1}{\alpha }
			\int_{ \Theta} \log \left(\dfrac{\alpha \pi_n(\theta)}{\pi(\theta)}\right) \dfrac{\alpha \pi_n(\theta)}{\pi(\theta)}p(x\mid\theta)\pi(\theta)   d\theta  \leq   0
		\end{align*}

		Eq. (\ref{eq.EKLn.propre.proof}) follows from Fubini-Tonelli theorem since $\log \left(\dfrac{\alpha \pi_n(\theta)}{\pi(\theta)}\right)<0$. 
	
\subsection{Details of   Section \ref{section.generic.exemple.ELC.compact}}

	We give here the main steps used to obtain    \eqref{eq.EKL.n1=n2} and \eqref{eq.EKL.n1.logn2}.
	The improper marginal distribution is 
	$$
	p(x)= \left\{\begin{array}{ll}
	\frac{1-q}{q}    & \text { if }x  \text {  is even }  \phantom{\int\limits_a}\\
	\lambda^{x-1} q+\lambda^{x+1}(1-q) & \text { if } x  \text { is odd }
	\end{array}\right.
	$$and the truncated prior is  $\pi_{n_1 ,n_2}(\theta)=\frac{1}{\pi\left(K_{n_1, n_2}\right)}\; \pi(\theta) \; \mathds{1}_{K_{n_1 ,n_2}}(\theta) $ with  $\pi\left(K_{n_1 n_2}\right)=\frac{\lambda^{2(n_ 1+1)}-1}{\lambda^2-1}+(n_2+1)\left(\frac{1-q}{q}\right)$. Then, 
	
	$$p_{n_1, n_2}(x)=\left\{\begin{array}{ll}
	(1-q) \pi_{n_1, n_2}(x+1)+q \pi_{n_1, n_2}(x-1) & \text { if } x\neq 0 \phantom{\int\limits_a}\\
	\frac{1-q}{q}.  \frac{1}{\pi (K_{n_1,n_2})} & \text { if } x=0 \phantom{\int\limits_a}\\
	
\end{array}\right.
$$
and 

$$p_{n_1, n_2}(x)=\frac{1}{\pi\left(K_{n_1, n_2}\right)} \times \left\{\begin{array}{ll}
(1-q) \lambda^{x+1}+q \lambda^{x-1}    & x=2 i+1, \quad 0 \leqslant i<n_1 
\phantom{\int\limits_a}\\
\frac{1-q}{q} \cdot   &   x=2 i, \quad 0 \leqslant i \leqslant n_2 			\phantom{\int\limits_a}\\
q \lambda^{x-1}    & x=2 n_1+1
\phantom{\int\limits_a}\\
(1-q)   & x=2(n_2+1) 
\phantom{\int\limits_a}\\
0&\textrm{otherwise}
\end{array}\right.
$$
The posterior distribution for 
$$
\pi(\theta \mid x)= 
\begin{cases}
\mathds{1}_{(\theta=x+1)}(1-q)+ q \mathds{1}_  {(\theta=x-1)} & x \text{ even, non zero;}  \phantom{\int\limits_a}\\
\frac{ (1-q) \lambda^2\mathds{1}_{(\theta=x+1)}+q \mathds 1_{(\theta=x-1)}}{q+\lambda^2(1-q)} & x \text { odd }\\
\end{cases}
$$
Since $\pi_{n_1,n_2}(\theta\mid x)\neq \pi(\theta\mid x)$ only for $x=2n_1+1$ or $x=2(n_2+1)$, we get, for these values of $x$~:
$$
\pi(K_{n_1,n_2} \mid x) =
\left\lbrace
\begin{array}{ll}
\frac{  q \,  }{ (1-q)\lambda^2 +q}
& \text{ if } x = 2n_1+1  \phantom{\int\limits_a}\\
q\,   
& \text{ when } x=2n_2+2 .
\end{array}
\right.
$$
Hence, the  EKL is given according to~:   
\begin{equation}
\label{eq.EKL.n1.n2}
\operatorname{EKL}_{n_1,n_2}=\frac{\lambda^{2 n_1} q}{\pi\left(K_{n_1, n_2}\right)} \ln \left(1+\lambda^2\left(\frac{1-q}{q}\right)\right)+\frac{(1-q)}{\pi\left(K_{n_1, n_2}\right)} \ln \left(\frac{1}{q}\right)
\end{equation}
Case 1:    $n=n_1=n_2$, we have 
$
\pi( K_{n} ) \sim  \frac{ \lambda^{2n+2} }{ \lambda^2 -1} $
and we get Formula    \eqref{eq.EKL.n1=n2}.
\\
Case 2: $n=n_1$ and $n_2= \lambda^{3n}$, we get 
$
\pi( K_{n} ) \sim  \frac{1-q }{ q } \lambda^{3n}
$
which gives Formulas \eqref{eq.EKL.n1=n2} and \eqref{eq.EKL.n1.logn2}.

 \subsection{Proof of Section  \ref{dvt.non.ELC}}
 \label{appendix.4.2.3}
 	Write $c_n=\sum_{k=0}^{n}\pi(k)$ and, for $k\ge1$, set
 	\[
 	R_k
 	=
 	\frac{(1-q_{k+2})\pi(k+2)}{q_k\pi(k)}
 	=
 	\frac{k\sqrt{k+1}}{(k+2)^{3/2}}e^{4k+4}.
 	\]
 	On the marginal support, the truncated and formal posteriors
 	agree except at $x=n,n+1$. For $x=k+1$, with
 	$k\in\{n-1,n\}$, the truncated posterior is $\delta_k$,
 	whereas the formal posterior assigns probability $(1+R_k)^{-1}$
 	to $\theta=k$. Therefore,
 	\[
 	\begin{aligned}
 	\operatorname{ETV}(\pi_n,\pi)
 	&=\frac1{c_n}\sum_{k=n-1}^{n}
 	\frac{\pi(k)}{k}\frac{R_k}{1+R_k},\\
 	\operatorname{EKL}(\pi_n,\pi)
 	&=\frac1{c_n}\sum_{k=n-1}^{n}
 	\frac{\pi(k)}{k}\log(1+R_k).
 	\end{aligned}
 	\]
 	Since $\pi(k)$ is increasing,
 	\[
 	0\le\frac{c_n}{\pi(n)}-1
 	\le n\frac{\pi(n-1)}{\pi(n)}
 	=n^{3/2}e^{-2n+1}\to0.
 	\]
 	Thus $c_n\sim\pi(n)$. Moreover,
 	$R_k\sim e^{4k+4}$, so $R_k/(1+R_k)\to1$ and
 	$\log(1+R_k)\sim4k$. The contributions with $k=n-1$
 	are negligible compared with those with $k=n$, yielding
 	\[
 	\operatorname{ETV}(\pi_n,\pi)\sim\frac1n,
 	\qquad
 	\operatorname{EKL}(\pi_n,\pi)
 	\sim\frac{\log(1+R_n)}{n}\to4.
 	\]
 	Hence ETV convergence holds, excluding strong inconsistency,
 	while EKL convergence fails.

\bibliographystyle{plainnat}
\bibliography{congloparadox}
\end{document}